\newtheorem{theorem}{Theorem}[section]
\newtheorem{prop}[theorem]{Proposition}
\newtheorem{lemma}[theorem]{Lemma}
\newtheorem{cor}[theorem]{Corollary}
\newtheorem{assm}[theorem]{Assumption}
\newtheorem{conj}[theorem]{Conjecture}
\newtheorem{ex}[theorem]{Example}
\newtheorem{dfn}[theorem]{Definition}
\newtheorem{remark}[theorem]{Remark}
\newtheorem{thm}{Theorem}
\newcommand{\bC}{\mathbb{C}}
\newcommand{\bD}{\mathbb{D}}
\newcommand{\bN}{\mathbb{N}}
\newcommand{\bR}{\mathbb{R}}
\newcommand{\bT}{\mathbb{T}}
\newcommand{\bZ}{\mathbb{Z}}
\newcommand{\cA}{\mathcal{A}}
\newcommand{\cD}{\mathcal{D}}
\newcommand{\cG}{\mathcal{G}}
\newcommand{\cH}{\mathcal{H}}
\newcommand{\cJ}{\mathcal{J}}
\newcommand{\cL}{\mathcal{L}}
\newcommand{\cM}{\mathcal{M}}
\newcommand{\cP}{\mathcal{P}}
\newcommand{\fc}{\mathfrak{c}}
\newcommand{\ra}{\rightarrow}
\newcommand{\mL}{\mathds{1}_L}
\newcommand{\mW}{\mathds{1}_W}
\DeclareMathOperator{\ham}{Ham}
\DeclareMathOperator{\ind}{Ind}
\DeclareMathOperator{\spec}{Spec}
\DeclareMathOperator{\CZ}{CZ}
\DeclareMathOperator{\CW}{CW}
\DeclareMathOperator{\CF}{CF}
\DeclareMathOperator{\HW}{HW}
\DeclareMathOperator{\HF}{HF}
\DeclareMathOperator{\minmax}{minmax}
\begin{document}
\title{Spectrally-large scale geometry in cotangent bundles}

\author{Qi Feng}
\email{feqi@mail.ustc.edu.cn}
\address{School of Mathematical Sciences, University of Science and Technology of China, 96 Jinzhai Road, Hefei Anhui, 230026, China}

\author{Jun Zhang}
\email{jzhang4518@ustc.edu.cn}
\address{The Institute of Geometry and Physics, University of Science and Technology of China, 96 Jinzhai Road, Hefei Anhui, 230026, China}

\begin{abstract}
    In this paper, we prove that the ${\rm Ham}$-orbit space from a fiber of a large family of cotangent bundles, as a metric space with respect to the Floer-theoretic spectral metric, contains a quasi-isometric embedding of an infinite-dimensional normed vector space. The same conclusion holds for the group of compactly supported Hamiltonian diffeomorphisms of some cotangent bundles. To prove this, we generalize a result, relating boundary depth and spectral norm for closed symplectic manifolds in Kislev-Shelukhin \cite{KS21}, to Liouville domains. Then we modify Usher's constructions in \cite{Ush13,Ush14} (which were used to obtain Hofer-large scale geometric properties) to achieve our desired conclusions.
\end{abstract}
\maketitle
\section{Introduction}
\subsection{Main results} Let $(M,\omega)$ be a compact symplectic manifold (possibly with boundary) and let $H:[0,1]\times M\ra \bR$ be a smooth function (called a Hamiltonian), which is compactly supported in $[0,1]\times {\rm int}(M)$. Hamiltonian $H$ generates a flow $\phi_H^t$ by integrating the Hamiltonian vector field $X_{H_t}$ determined by $\omega(\cdot , X_{H_t})=dH_t$. We denote $\ham(M,\omega)$ as the group of time-one maps of the flow $\phi^t_H$, called the Hamiltonian diffeomorphism group of $(M, \omega)$. There have been several interesting metrics defined on ${\rm Ham}(M, \omega)$. For instance, due to \cite{Hof90}, for any $\phi\in\ham(M,\omega)$, consider 
$$\|\phi\|_{\rm Hofer}\coloneqq \inf\left\{\int_0^1 \max_{M}H(t,\cdot)-\min_{M}H(t,\cdot)\,dt\,\bigg|\,\phi^1_H=\phi\right\},$$
then $d_{\rm{Hofer}}(\phi,\psi)=\|\phi^{-1}\psi\|_{\rm Hofer}$ defines a Finsler-type metric on ${\rm Ham}(M, \omega)$. To describe large-scale geometric properties of the metric space $({\rm Ham}(M, \omega), d_{\rm Hofer})$ in a convenient way, let us introduce the following concept. 

\begin{dfn} 
Let $(M_1,d_1), (M_2, d_2)$ be metric spaces. A map $f: M_1 \to M_2$ is called a {\rm quasi-isometric embedding} if there exist constant $A\geq 1, B \geq 0$ such that 
    $$\dfrac{1}{A}d_1(x,y)-B\leq d_2(f(x),f(y))\leq Ad_1(x,y)+B$$
for any $x,y\in M_1$. We say that $(M,d)$ contains a {\rm rank-$n$ quasi-flat} if there is a quasi-isometric embedding from $(\mathbb{R}^n,d_\infty)$ to $(M,d)$, where $n\in\bN\cup\{\infty\}$ and $d_{\infty}(x,y)=\|x-y\|_\infty$. 
\end{dfn}

A result in \cite{Ush13} shows that metric space $({\rm Ham}(M, \omega), d_{\rm Hofer})$ contains a rank-$\infty$ quasi-flat if $(M, \omega)$ is a closed symplectic manifold which satisfies some dynamical property. 

\medskip

In this paper, we consider another norm derived from Hamiltonian Floer theory\footnote{In this paper, all the cohomology groups are in $\bZ_2$-coefficient. }. For a Liouville domain $(W,\omega)$, one associates spectral invariant $c(\alpha,H)$ to any pair $(\alpha,H)\in H^*(W) \times C_c^\infty([0,1]\times W)$, which belongs to the spectrum of the action functional of $H$. In \cite{FS07}, it shows that for compactly supported Hamiltonians $H$ and $F$ with the same time-one map $\phi_H^1 = \phi_F^1$, the spectral invariant $c(\alpha, H)=c(\alpha, F)$ for any $\alpha\in H^*(W)$.  Therefore, the spectral invariant can be defined on the group of compactly supported Hamiltonian diffeomorphisms $\ham(W,\omega)$.  For any $\phi = \phi_H^1 \in \ham(W,\omega)$, denote $c(\alpha,\phi)$ by $c(\alpha,H)$ and define the spectral norm $\gamma$ on $\ham(W,\omega)$ as follows,
\begin{equation} \label{gamma-phi}
\gamma(\phi)\coloneqq c(\mW,H)+c(\mW,\overline{H})
\end{equation}
where $\mW$ is the unit in $H^*(W)$ and $\overline{H}$ generates $\phi^{-1}$. Roughly speaking, $\gamma(\phi)$ measures the largest gap between the spectrum of the action functional of $H$. Then for any $\phi,\psi\in\ham(W,\omega)$, define $d_{\gamma}(\phi,\psi)\coloneqq\gamma(\phi^{-1}\psi)$, which turns out to be a bi-invariant metric on $\ham(W,\omega)$. It is readily verified that 
\begin{equation}\label{gamma-hofer}
\gamma(\phi) \leq \|\phi\|_{\rm Hofer}.
\end{equation} Therefore, the large-scale geometric phenomenon with respect to $d_{\rm Hofer}$ can not imply the same phenomenon with respect to $d_{\gamma}$. 

\medskip

The following theorem describes the large-scale geometric phenomenon in the metric space $\left(\ham(D^*_gN,\omega_{\rm{can}}),d_\gamma\right)$.
\begin{thm}\label{Ham-infinite-flat}
   Let $(N, g)$ be a closed Riemannian manifold. Suppose there is no non-constant  contractible  closed  geodesic in $(N,g)$, then metric space $\left(\ham(D_g^*N,\omega_{\rm can}),d_{\gamma}\right)$ contains a rank-$\infty$ quasi-flat. 
\end{thm}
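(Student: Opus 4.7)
The plan is to adapt Usher's constructions from \cite{Ush13, Ush14}---originally designed to produce Hofer-large quasi-flats on closed symplectic manifolds---to the spectral-norm setting in cotangent bundles, using the generalized Kislev--Shelukhin inequality for Liouville domains (established earlier in the paper) to pass from boundary-depth lower bounds to $\gamma$-lower bounds. The hypothesis that $(N,g)$ has no non-constant contractible closed geodesic plays precisely the role of Usher's ``dynamical property'': it ensures that carefully chosen Hamiltonians on $D_g^* N$ have a tractable set of contractible $1$-periodic orbits, so that Floer persistence modules can be read off explicitly.

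Concretely, one chooses a countable family of pairwise disjoint open subsets $U_k \subset D_g^* N$, each Darboux-embedded, together with compactly supported Hamiltonians $H_k$ with ${\rm supp}(H_k) \subset U_k$ modelled on Usher's local bumps, such that the time-one map $\phi_k \coloneqq \phi_{H_k}^1$ has a Floer barcode with a verifiable bar whose length scales linearly in the amplitude of $H_k$. The no-contractible-closed-geodesic assumption guarantees that no spurious $1$-periodic orbits appear in the cotangent directions, so the local Floer computation inside each $U_k$ controls the full barcode. For $\bar a = (a_k) \in \ell^\infty(\bN)$ of finite support, set
\begin{equation*}
\Phi(\bar a) \coloneqq \prod_k \phi_{a_k H_k}^1 \in \ham(D_g^* N, \omega_{\rm can}),
\end{equation*}
which is well defined because the $U_k$ are pairwise disjoint and the corresponding flows commute; extension to all of $\ell^\infty$ is achieved by shrinking the $U_k$ so they accumulate in a bounded region, as in Usher's original setup.

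For the quasi-isometric inequalities, the disjoint-support property makes the Floer complex of $\Phi(\bar a)\Phi(\bar b)^{-1} = \Phi(\bar a - \bar b)$ split as a direct sum over $k$, whence the boundary depth satisfies
\[
\beta(\Phi(\bar a - \bar b)) \geq \sup_k \beta(\phi_{(a_k - b_k)H_k}^1) \geq c_1 \|\bar a - \bar b\|_\infty.
\]
The generalized Kislev--Shelukhin inequality then yields $\gamma(\Phi(\bar a - \bar b)) \geq c_2 \|\bar a - \bar b\|_\infty$, the desired lower bound. The matching upper bound follows from $d_\gamma \leq d_{\rm Hofer}$ together with the elementary oscillation estimate $\|\Phi(\bar a - \bar b)\|_{\rm Hofer} \leq C \|\bar a - \bar b\|_\infty$, which once again uses only that the $H_k$ have disjoint supports and uniformly bounded oscillations per unit amplitude.

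The main obstacle is the construction of the $H_k$: one must arrange the local Hamiltonians so that the boundary-depth lower bound is computable inside a Liouville domain, and so that the direct-sum decomposition of Floer complexes across disjoint supports remains compatible with the hypotheses of the generalized Kislev--Shelukhin inequality. In particular, one must verify that any non-degeneracy or barcode-stability conditions in the Liouville version of Kislev--Shelukhin can be arranged for each product $\Phi(\bar a)$---possibly by generic time-dependent perturbation---and that the no-closed-geodesic assumption indeed suppresses all unwanted $1$-periodic orbits when the $H_k$ are assembled. Once these technicalities are handled, the remainder of the argument is essentially formal.
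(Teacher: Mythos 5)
There is a genuine gap, and it sits at the heart of your construction. The per-ball estimate $\beta(\phi^1_{a_kH_k})\geq c_1|a_k|$ cannot hold when $H_k$ is compactly supported in a small Darboux ball $U_k\subset D_g^*N$: any Hamiltonian supported in a displaceable open set has boundary depth (and spectral norm) bounded above by the displacement energy of that set, \emph{uniformly in the amplitude} — this is exactly Usher's displacement-energy bound for $\beta$ together with the standard energy--capacity inequality for $\gamma$ — and small Darboux balls in $D_g^*N$ (in particular after you ``shrink the $U_k$ so they accumulate in a bounded region'') are displaceable by compactly supported Hamiltonian isotopies. So no bar of the barcode of $\phi^1_{a_kH_k}$ can scale linearly in $a_k$. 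Moreover, the no-contractible-closed-geodesic hypothesis does not play the role you assign to it: once the amplitude $a_k$ is large, a bump function in a Darboux chart acquires many non-constant \emph{contractible} $1$-periodic orbits (circling level sets inside the ball) regardless of the geometry of $(N,g)$; the hypothesis constrains closed geodesics, i.e.\ the dynamics of Hamiltonians of the form $f(\|p\|)$, not of arbitrary local bumps. Finally, the claimed direct-sum splitting of the Floer complex for disjointly supported Hamiltonians is not automatic: the generators split, but Floer strips may connect chords/orbits located in different $U_k$'s, so $\beta\geq\sup_k\beta_k$ needs an argument even in settings where the per-ball bound were true.

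For comparison, the paper avoids all of this by taking a single Hamiltonian $H(q,p)=2\|p\|-1/2$ and the reparametrizations $H_a=f_a\circ H$ of Example \ref{ex-Usher}, whose ``bumps'' occupy nested shells $\{\|p\|\in I_i\}$ rather than displaceable balls. Non-constant contractible $1$-periodic orbits of $f\circ H$ would correspond to contractible closed geodesics of $(N,g)$, which the hypothesis excludes; hence (Proposition \ref{ham-estimate}, extending Usher's Theorem 5.6 to manifolds with boundary) after a $C^0$-small perturbation the filtered Floer complex agrees with a Morse complex and one gets $\beta(f\circ H)\geq \minmax f-\min f$, which grows like $\|a\|_\infty$ for $f=f_{a-b}$. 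Theorem \ref{betaleqgamma} then converts this into the lower bound for $d_\gamma$, and $d_\gamma\leq d_{\rm Hofer}\leq 2\|a-b\|_\infty$ gives the upper bound. Your proposal shares the outer skeleton ($\beta\leq\gamma$ plus a Hofer upper bound), but the local-bump construction in the middle would have to be replaced by this shell construction (or something else non-displaceable) for the argument to work.
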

 
Theorem~\ref{Ham-infinite-flat} above generalizes the main result in \cite{Mai22} when we specify our setting on the symplectic manifold $(D^*_gN, \omega_{\rm can})$, where \cite{Mai22} proves the unboundedness of $\left(\ham(D_g^*N,\omega_{\rm can}),d_{\gamma}\right)$. Standard manifolds $N$ where Theorem~\ref{Ham-infinite-flat} applies include $\bT^n$, closed hyperbolic manifolds, as well as their products. In particular, $N =S^n$ for $n \geq 2$ are excluded (cf.~Corollary \ref{cor-sn}). Note that \cite{Mai22} in fact proves the unboundedness of $\left(\ham(W,\omega), d_\gamma\right)$ for any Liouville domain $(W, \omega)$ whenever ${\rm SH}^*(W, \omega) \neq 0$ (where $W = D^*_gN$ satisfies this condition due to Viterbo's theorem \cite{Vit92}). Inspired by Theorem~\ref{Ham-infinite-flat}, we post the following conjecture, where its proof might depend on the Morse (co)homology of the ``core'' (or sometimes called the skeleton) of a Liouville domain $(W, \omega)$.

\begin{conj} \label{conj-1} For any Liouville domain $(W, \omega)$ with ${\rm SH}^*(W, \omega) \neq 0$, the metric space $\left(\ham(W,\omega),d_{\gamma}\right)$ contains a rank-$\infty$ quasi-flat. \end{conj}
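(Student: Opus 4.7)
The plan is to adapt Usher's construction from \cite{Ush13} to the Liouville setting, substituting the closed-manifold boundary-depth/spectral-norm comparison used there by its Liouville counterpart established earlier in this paper. The target is a family of compactly supported autonomous Hamiltonians $\{H_a\}$ parametrized by finitely supported sequences $a = (a_i)_{i \geq 1} \in \bR^\infty$, whose time-one maps $\phi_a$ will furnish a quasi-isometric embedding of $(\bR^\infty, d_\infty)$ into $(\ham(W,\omega), d_\gamma)$.

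First, I would fix a Lagrangian skeleton $L \subset W$ and a Morse function $f$ on its smooth locus $L^{\rm sm}$ with a countable family of isolated critical points $\{p_i\}$ that contribute non-trivially to a Morse-theoretic model for $\mathrm{SH}^*(W, \omega)$; the hypothesis $\mathrm{SH}^*(W, \omega) \neq 0$ is what would guarantee the existence of such data, paralleling the role played by the Abbondandolo--Schwarz isomorphism in the cotangent-bundle case underlying Theorem \ref{Ham-infinite-flat}. Next, around each $p_i$ I would choose pairwise disjoint Darboux neighborhoods and a fixed bump profile $h_i$ localized there, and for each finitely supported sequence $a$ set
\[
H_a \;=\; \sum_{i} a_i\, h_i.
\]
Disjoint supports ensure that the $\phi_{H_a}$ mutually commute, that $\mathrm{CF}^*(H_a)$ decomposes into local pieces near each $p_i$, and that a local Morse-theoretic computation yields a boundary-depth estimate of the form $\beta(H_a - H_b) \asymp \|a - b\|_\infty$.

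The upper bound $d_\gamma(\phi_a, \phi_b) \lesssim \|a - b\|_\infty$ then follows from $\gamma \leq \|\cdot\|_{\rm Hofer}$ together with the obvious Hofer estimate on $H_a - H_b$. The lower bound $d_\gamma \gtrsim \|a - b\|_\infty$ is where the Liouville version of the Kislev--Shelukhin inequality established in this paper enters: it controls $\beta(\phi_a^{-1}\phi_b)$ by a constant multiple of $\gamma(\phi_a^{-1}\phi_b)$, so that the matching lower bound for $\beta$ closes the quasi-isometry estimate. This is exactly the point at which the present paper's extension of \cite{KS21} to Liouville domains becomes indispensable.

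The principal obstacle is the intrinsic singularity of a general Lagrangian skeleton. Unlike the cotangent bundle, where the zero section is smooth and classical Morse theory applies verbatim, a Weinstein skeleton typically carries arboreal or more general isotropic singularities, so one must isolate a smooth open stratum large enough to host infinitely many critical points of $f$ while still controlling the Floer output they generate. A plausible workaround is to replace Morse theory on $L$ by wrapped Floer cohomology of Lagrangian cocores dual to a smooth stratum, and then invoke the Chantraine--Dimitroglou Rizell--Ghiggini--Golovko/Ganatra--Pardon-type equivalence with $\mathrm{SH}^*(W, \omega)$. A secondary, but still delicate, difficulty is eliminating spurious action values from short contractible Reeb orbits near $\partial W$, which played the role of the no-contractible-geodesic hypothesis in Theorem \ref{Ham-infinite-flat}; this may require restricting the construction to a carefully chosen action window or imposing a dynamical condition on the contact boundary that is automatic for the cotangent setting.
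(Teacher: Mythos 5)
The statement you are addressing is not proved in the paper at all: it is stated as Conjecture \ref{conj-1}, explicitly left open, with only the heuristic remark that a proof ``might depend on the Morse homology of the Lagrangian skeleton.'' So there is no paper proof to compare against, and your text should be judged as a proposed program. As such it contains a genuine gap, and in one place an actual error, rather than a complete argument.

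The error is in the core construction. You localize the Hamiltonians $H_a=\sum_i a_i h_i$ in pairwise disjoint Darboux neighborhoods of isolated points $p_i$ of the skeleton and claim a local Morse computation gives $\beta(H_a-H_b)\asymp\|a-b\|_\infty$. This cannot work: a Darboux ball around a point is (in essentially every case of interest, and certainly with no hypothesis ruling it out) displaceable inside $W$, and both the boundary depth and the spectral norm of a Hamiltonian supported in a displaceable set are bounded by the displacement energy of that set, uniformly in the Hamiltonian (this is exactly Usher's energy--boundary depth mechanism in \cite{Ush13,Ush14}). Equivalently, once $a_i$ is large, $a_ih_i$ is far from $C^2$-small, its Floer complex acquires non-constant orbits, and the ``local Morse model'' no longer computes $\beta$; the surviving bound is by the capacity of the ball, not by $|a_i|$. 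The reason the constructions behind Theorem \ref{Ham-infinite-flat} and Theorem \ref{Lag-infinite-flat} succeed is precisely that the Hamiltonians there are functions of $|p|$, so each level set is a fiberwise star-shaped hypersurface enclosing the entire (non-displaceable) zero section or meeting a fiber, and the no-contractible-geodesic (or geodesic index) hypothesis identifies the Floer complex with a Morse complex globally, via Proposition \ref{ham-estimate} and Proposition \ref{Ush14-prop4.3}. Your proposal has no substitute for this non-displaceability of supports, and $\mathrm{SH}^*(W,\omega)\neq 0$ by itself does not provide one. The remaining steps you cite from the paper (Theorem \ref{betaleqgamma}, i.e.\ $\beta\le\gamma$, and $\gamma\le\|\cdot\|_{\mathrm{Hofer}}$) are fine, but they only transfer a large-$\beta$ estimate that you have not established. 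Your closing paragraph essentially concedes this: handling the singular skeleton, producing infinitely many ``independent'' directions with controlled Floer output, and suppressing boundary Reeb contributions are exactly the open points of Conjecture \ref{conj-1}, not technicalities to be deferred.
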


There is an ongoing work \cite{FZ25} proving that if $2c_1(W)=0$, then ${\rm SH}^*(W,\omega)\neq 0$ if and only if  there exists  a quasi-isometric embedding from $(C^\infty_c(I), d_{\infty})$ to $(\ham(W,\omega), d_{\gamma})$.

\medskip

Next, let us turn to the relative case, where we consider Lagrangian submanifolds $L \subset M$ (where $L$ could intersect $\partial M$). Denote by $\mathcal{L}(L)$ the orbit space of a Lagrangian manifold $L$ under the action by group $\ham(M,\omega)$, that is, 
\begin{equation} \label{Lag-orb}
\mathcal L(L) : = \{\phi(L) \,| \, \phi \in \ham(M,\omega)\}. 
\end{equation}
An example of our interest will be the cotangent fiber $F_x \subset D_g^*N$ for a closed Riemannian manifold $(N, g)$. By \cite{Che11,Sug16}, the Hofer norm $\|\cdot\|_{\rm{Hofer}}$ on $\ham(M,\omega)$ induces a genuine metric defined as follows, for any $L_1, L_2\in\cL(L)$,
$$\delta_{\rm Hofer}(L_1,L_2)\coloneqq\inf\{\|\phi\|_{\rm{Hofer}}\mid \phi(L_1)=L_2,\,\phi\in\ham(M,\omega)\}.$$
In \cite{Ush14}, for a class of Riemannian manifolds $(N, g)$ including $(S^{n \geq 3}, g_{\rm std})$, it is demonstrated that for any cotangent fiber $F_x$  the metric space $\left(\mathcal L(F_x),d_{\rm{Hofer}}\right)$ contains a rank-$\infty$ quasi-flat.

\medskip

Similarly as above, we consider the spectral norm on  Lagrangian submanifolds in a Liouville domain $W$. Here, we will be interested in those Lagrangian submanifolds that intersect $\partial W$ in a Legendrian way (for details, see Section \ref{admissible-Lag}), for instance, the cotangent fibers $F_x \subset D_g^*N$. To each pair $(\alpha,H)\in H^*(L)\times C^\infty_c([0,1]\times W)$, one associates  Lagrangian spectral invariant $\ell(\alpha,H)$ via wrapped Floer cohomology (see Section \ref{wrapped-Floer}). Recent work \cite{Gon24} shows that in the Liouville domain setting, if compactly supported Hamiltonians $H$ and $F$ satisfy $\phi^1_H(L)=\phi^1_{F}(L)$, then $\ell(\alpha,H)=\ell(\alpha,F)$ for any $\alpha\in H^*(L)$ (cf.~Theorem 1.2 in \cite{Lec08}). Thus $\ell(\alpha,\cdot)$ descends to the orbit space $\cL(L)$. For any $L' = \phi_H^1(L) \in \mathcal L(L)$, denote $\ell(\alpha,L')$ by $\ell(\alpha,H)$ and we define the spectral norm on $\cL(L)$ as follows,
\begin{equation} \label{gamma-L}
\gamma(L')\coloneqq\gamma(L;H)=\ell(\mL,H)+\ell(\mL,\overline{H})
\end{equation}
where $\mL$ is the unit in $H^*(L)$ and $\overline{H}$ generates $\phi^{-1}$. Then we define a bi-invariant metric on $\cL(L)$ as follows, 
$$\delta_{\gamma}(L_1,L_2): =\gamma((\phi_{F}^1)^{-1}\phi_{G}^1(L))$$
for any $L_1=\phi_F^1(L), L_2=\phi_G^1(L)$ (in particular, the non-degeneracy of $\delta_{\gamma}$ is from Theorem 3 in \cite{Gon24}). We emphasize again that $\gamma$ in (\ref{gamma-L}), as well as the induced $\delta_{\gamma}$, is well-defined on $\mathcal L(L)$.

\medskip

The following theorem describes the large scale geometric property of the metric space $\left(\cL(F_x),\delta_\gamma\right)$, where $F_x$ is a cotangent fiber of $D^*_gN$.
\begin{thm}\label{thm-Lag-flat}
    Let $(N,g)$ be a compact connected Riemannian manifold and suppose that there are two non-conjugate points $x_0, x_1\in N$, a homotopy class $\fc\in\pi_0(\mathcal{P}_N(x_0,x_1))$, where $\cP_N(x_0,x_1)$ is the space of smooth paths from $x_0$ to $x_1$ in $N$, and an integer $k$ satisfying all of the following conditions:
    \begin{itemize}
        \item[(i)] There are geodesics from $x_0$ to $x_1$ representing the class $\fc$ and having Morse index $k$, but no geodesics from $x_0$ to $x_1$ representing the class $\fc$ have Morse index $k-1$ or $k+1$.
        \item[(ii)] Only finitely many geodesics from $x_0$ to $x_1$ representing the class $\fc$ have Morse index in $\{k,k+2\}$.
        \item[(iii)] Either $\dim N\ne 2$ or $k\neq 0$.
    \end{itemize}
    Then the metric space $\left(\mathcal{L}(F_{x_0}),\delta_\gamma\right)$  contains a rank-$\infty$ quasi-flat.
\end{thm}
Theorem~\ref{thm-Lag-flat} above enhances the main result in \cite{Gon24} when we  consider the setting $(D^*_gN,\omega_{\rm{can}})$. In this case, \cite{Gon24} only proves that $\left(\cL(F_x),\delta_\gamma\right)$ is unbounded. Moreover, \cite{Gon24} proves  $\left(\cL(L),\delta_\gamma\right)$ is unbounded for any admissible Lagrangian submanifold $L$ in a general Liouville domain $(W,\omega)$ whenever ${\rm \HW}^*(L)\ne 0$. One can post a similar conjecture as in Conjecture~\ref{conj-1} for a relative situation. 

\begin{ex}\label{example}
Here are some examples that the assumption in Theorem~\ref{thm-Lag-flat} holds:\begin{enumerate}
\item (Proposition 3.10 in \cite{Ush14}) Let $(N,g)$ be either a compact semi-simple Lie group with a bi-invariant metric, or a sphere $S^n$ where $n\geq 3$ with its standard metric.  Then the assumption holds for $k=0$.
\item Let $(N,g)$ a sphere $S^1$ with its standard metric.
Then the  assumption  holds with any $k$ since the homotopy class will entirely classify the geodesics connecting two points.
\item Let $(N,g)$ be the closed oriented surface $\Sigma_{g\geq 1}$ with a standard metric. Then the assumption holds for any $k$ as the homotopy class will entirely classify the geodesics connecting two points.
\end{enumerate}
\end{ex}

We emphasize that $S^2$ does {\it not} satisfy the conditions in Theorem~\ref{thm-Lag-flat}, in particular, condition (i). Indeed, for any pair of non-conjugate points $x_0, x_1$ on the standard $S^2$, there exists a geodesic from $x_0$ to $x_1$ with Morse index $k$ for any $k \in \bN$. Compared to the higher-dimensional $S^n$, the issue with $S^2$  is that indices of the generators  corresponding to different geodesics are close to each other, so it is difficult to directly exclude the interplay between generators with adjacent indices (cf.~condition (i) in Theorem \ref{thm-Lag-flat}). Constraints on geodesics from $x_0$ to $x_1$ beyond indices may allow us to estimate the spectral norm furthermore. 

\medskip

The large-scale geometric properties of the absolute case in terms of the Hamiltonian diffeomorphism groups ${\rm Ham}$ and the relative case in terms of the Lagrangian submanifolds $\mathcal L(L)$ are related. Let us give another definition first. 

\begin{dfn}\label{dfn-strong-qf} For any fixed $n \in \bN \cup \{\infty\}$ and a Lagrangian submanifold $L \subset M$, a rank-$n$ quasi-flat $\phi: (\mathbb{R}^n,d_\infty) \to \left(\mathcal L(L), \delta_{\gamma}\right)$ is {\rm in a strong sense} if it is induced by a {\rm homomorphism} $\psi: \mathbb{R}^n \to {\rm Ham}(M, \omega)$ such that $\phi(a) = \psi(a) \cdot L$ for any $a \in \bR^n$. \end{dfn}

Here is a standard example of a homomorphism $\psi: \mathbb{R}^n \to {\rm Ham}(M, \omega)$, which is directly taken from \cite{Ush13}. 

\begin{ex} \label{ex-Usher} Consider any (compactly supported) $H: (M, \omega) \to \bR$ and $f: \bR\ra [0,1]$ with the following properties:
\begin{itemize}
\item ${\rm{supp}}(f)=[\delta,1-\delta]$ for some small real number $\delta>0$.
\item The only local extremum of $f|_{(\delta,1-\delta)}$ is a maximum, at $f(\frac{1}{2})=1$.
\end{itemize}
For $a = (a_1,\cdots, a_n) \in \bR^{n}$ define  
\begin{equation} \label{hom-psi}
\psi(a) \coloneqq \phi^1_{\left(\sum_{i=1}^{n}a_if(2^{i}s-1)\right) \circ H}.
\end{equation}
Obviously, $\psi$ defines a homomorphism since the corresponding Hamiltonian vector field is simply a rescaling of $X_H$. By changing $n$ to $\infty$ in the construction of $\psi$ above, one easily obtains a homomorphism from $\bR^{\infty}$ to ${\rm Ham}(M, \omega)$. For later use (see the proof of Theorem \ref{thm-Lag-flat}), let us denote by $f_a \circ H \coloneqq \left(\sum_i a_if(2^{i}s-1)\right) \circ H$ or for further simplicity, denote $f_a \circ H = : H_a$. 
\end{ex}

In fact, the example above will be repeatedly used (with mild modifications) in the proofs of Theorem~\ref{Ham-infinite-flat} and Theorem~\ref{thm-Lag-flat}.

\begin{thm} \label{abs_rel}
 Let $(N, g)$ be a closed Riemannian manifold, and 
 $L$ be an admissible Lagrangian in $D^*_gN$. If the metric space $\left(\cL(L), \delta_{\gamma}\right)$ contains a rank-$\infty$ quasi-flat $\phi:\bR^\infty\ra \cL(L)$, then there exists a map $\psi:\mathbb{R}^\infty \to \ham(D^*_gN,\omega_{\rm{can}})$, constants $A \geq 1$ and $B \geq 0$ such that 
\begin{equation} \label{abs-rel-leftineq}
 \frac{1}{A}|a-b|_{\infty} - B \leq d_{\gamma}(\psi(a), \psi(b)) 
 \end{equation}
 and for any $a, b \in \bR^{\infty}$, $\phi(a)=\psi(a)\cdot L$. Moreover, if rank-$\infty$ quasi-flat above is in a strong sense (see Definition \ref{dfn-strong-qf}), then for any $n \in \mathbb N$, the metric space  
$\left(\ham(D^*_gN,\omega_{\rm{can}}), d_{\gamma}\right)$ contains a rank-$n$ quasi-flat. 
\end{thm}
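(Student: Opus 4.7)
The plan is to lift the given quasi-isometric embedding $\phi: \bR^\infty \to \cL(L)$ to a map $\psi: \bR^\infty \to \ham(D^*_gN, \omega_{\rm can})$ via the orbit projection, relying on a contractivity property of the Hamiltonian spectral norm under the orbit map $\Phi \mapsto \Phi \cdot L$. For each $a \in \bR^\infty$, by definition of $\cL(L)$ there exists a compactly supported Hamiltonian $H_a$ with $\phi^1_{H_a}(L) = \phi(a)$; pick any such $H_a$ and set $\psi(a) := \phi^1_{H_a}$, so that $\phi(a) = \psi(a) \cdot L$ by construction. Note that $\psi$ is not required to be a homomorphism here.

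The main technical ingredient is the contractivity
$$\gamma(\Phi \cdot L) \leq \gamma(\Phi) \quad \text{for every } \Phi \in \ham(D^*_gN, \omega_{\rm can}).$$
Unpacking \eqref{gamma-phi} and \eqref{gamma-L}, this reduces to the pointwise inequality $\ell(\mL, H) \leq c(\mM, H)$, applied to a Hamiltonian $H$ generating $\Phi$ and to $\overline{H}$ generating $\Phi^{-1}$, and then summed. Such an inequality should follow from the closed-open map $\mathcal{CO}: {\rm SH}^*(D^*_gN) \to {\rm HW}^*(L)$, which sends the unit $\mM$ to $\mL$ and respects action filtrations at the cochain level; this is the wrapped-Floer analogue of the Leclercq--Zapolsky-type spectral inequality in the closed setting. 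Granting the contractivity and using bi-invariance of both spectral metrics gives
$$\delta_\gamma(\psi(a) \cdot L, \psi(b) \cdot L) = \gamma\bigl(\psi(b)^{-1}\psi(a) \cdot L\bigr) \leq \gamma\bigl(\psi(b)^{-1}\psi(a)\bigr) = d_\gamma(\psi(a), \psi(b)),$$
and combining with the lower bound $\frac{1}{A'}|a-b|_\infty - B' \leq \delta_\gamma(\phi(a), \phi(b))$ from the quasi-isometric embedding property of $\phi$ yields the required lower bound on $d_\gamma(\psi(a), \psi(b))$.

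For the strong-sense case, $\psi: \bR^\infty \to \ham$ is a homomorphism, and fixing $n \in \bN$ we restrict to $\bR^n \subset \bR^\infty$. The basis vectors $e_i$ define commuting one-parameter subgroups $t \mapsto \phi^t_{K_i}$ in $\ham$, so that $\psi(a-b) = \phi^{a_n - b_n}_{K_n} \circ \cdots \circ \phi^{a_1 - b_1}_{K_1}$, and thus
$$\|\psi(a-b)\|_{\rm Hofer} \leq \sum_{i=1}^n |a_i - b_i| \cdot \bigl(\max K_i - \min K_i\bigr) \leq C_n |a - b|_\infty,$$
where $C_n := \sum_{i=1}^n (\max K_i - \min K_i)$. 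Invoking \eqref{gamma-hofer} to pass from Hofer to $\gamma$ and combining with the lower bound established above, we obtain a rank-$n$ quasi-flat in $(\ham(D^*_gN, \omega_{\rm can}), d_\gamma)$. The constant $C_n$ a priori grows with $n$, which is why the statement is confined to finite $n$.

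The hard part is the contractivity $\gamma(\Phi \cdot L) \leq \gamma(\Phi)$. While the analogous inequality is classical for closed symplectic manifolds, its rigorous formulation in the Liouville domain and wrapped Floer setting --- in particular, establishing the closed-open chain map with the correct action-filtration behavior for the chosen admissible profiles at infinity --- requires careful technical setup, and will form the bulk of the work.
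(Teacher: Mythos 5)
Your overall route is the same as the paper's: establish the contractivity $\gamma(\Phi\cdot L)\leq\gamma(\Phi)$, i.e.\ $\ell(\mL,H)\leq c(\mM,H)$ applied to $H$ and $\overline{H}$, then combine it with the quasi-isometric lower bound for $\phi$ to get (\ref{abs-rel-leftineq}), and use $d_\gamma\leq d_{\rm Hofer}$ together with the homomorphism property for the strong-sense upper bound. The inequality you defer as ``the bulk of the work'' is exactly the paper's Theorem \ref{module-structure} specialized to $\alpha=\mathds{1}_N$, $\beta=\mL$, $H=0$, and the paper proves it by precisely the mechanism you gesture at: a filtered module action $\ast:\HF^*(K)\otimes\HW^*(L;H)\to\HW^*(L;H\sharp K)$ defined by counting disks with two boundary punctures and one interior puncture, compatible with the PSS maps and sending unit to unit; so no new closed-open machinery beyond what the paper sets up is needed. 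The one place where your argument is genuinely different, and slightly weaker, is the strong-sense step: you assume each coordinate direction $t\mapsto\psi(te_i)$ is the flow of a fixed compactly supported autonomous Hamiltonian $K_i$, and bound $\|\psi(a-b)\|_{\rm Hofer}\leq\sum_i|a_i-b_i|(\max K_i-\min K_i)$. Definition \ref{dfn-strong-qf} only gives a group homomorphism, not that its one-parameter subgroups are autonomous Hamiltonian flows (this does hold for the constructions of Example \ref{ex-Usher}, but is not part of the hypothesis). The paper avoids this by a subdivision trick: writing $b-a=(\lfloor|x|_\infty\rfloor+1)\sum_i\frac{x_i}{\lfloor|x|_\infty\rfloor+1}e_i$ and using only the homomorphism property and the triangle inequality, which gives $\|\psi(b-a)\|_{\rm Hofer}\leq(|x|_\infty+1)\max_{|t|_\infty\leq1}\|\psi(t)\|_{\rm Hofer}$, hence $n$-dependent constants $A(n)=B(n)$ without any smoothness assumption on $\psi$; you should either adopt that rescaling argument or state the autonomous-generator hypothesis explicitly. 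Both approaches produce constants growing with $n$, which is why the conclusion is stated for finite $n$ (and improves to $n=\infty$ only for well-chosen $\psi$, as in Remark \ref{rmk-a-b}).
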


We emphasize that the map $\psi$ in Theorem \ref{abs_rel} above is not necessarily a homomorphism.

\medskip

The proof of Theorem~\ref{abs_rel} is essentially based on the $H^*(W)$-module structure of $H^*(L)$, where the action of class $\alpha \in H^*(W)$ on class $\beta \in H^*(L)$ is denoted by $\alpha \cdot \beta$, where up to PSS maps the module structure is given by counting certain pseudo-holomorphic strips with a slit asymptotic to Hamiltonian chords and Hamiltonian closed orbits (for details, see Section \ref{sec-mod}). As a comparison, for closed monotone Lagrangian and the behavior of the spectral invariants under the module structure, see Section 4.3 in \cite{LZ18}.

\begin{theorem}\label{module-structure}
    Let $L$ be an admissible Lagrangian submanifold in the Liouville domain $W$, then for any $\alpha\in H^*(W; \bZ_2)$ and $\beta\in H^*(L; \bZ_2)$, we have $\ell(\alpha\cdot\beta,H\sharp K)\leq\ell(\beta,H)+c(\alpha,K)$ for any Hamiltonian functions $H,K \in C_c^{\infty}([0,1] \times W)$. 
\end{theorem}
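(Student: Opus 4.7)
The plan is to realize the module action of $H^*(W;\bZ_2)$ on $H^*(L;\bZ_2)$ at the chain level by a degree-preserving operation
\[
m\colon \CW^*(L;H)\otimes \CF^*(K)\to \CW^*(L;H\sharp K),
\]
to prove that this operation satisfies a sharp action-energy identity, and then to conclude by optimizing over cocycle representatives. The upshot is that the desired inequality is the natural filtered lift of the cohomology-level module structure used to define $\alpha\cdot\beta$.

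First, I fix regular admissible Floer data for $H$, $K$ and $H\sharp K$. For any $\epsilon>0$, the definitions of $\ell$ and $c$ let us choose cocycles $\sigma\in \CW^*(L;H)$ and $\tau\in \CF^*(K)$ representing $\beta$ and $\alpha$ whose maximal generator actions do not exceed $\ell(\beta,H)+\epsilon$ and $c(\alpha,K)+\epsilon$ respectively. The operation $m$ is defined by $\bZ_2$-counting rigid solutions of a perturbed Cauchy--Riemann equation on a Riemann surface $\Sigma$ obtained from the strip $\bR\times[0,1]$ by attaching a cylindrical input end in the interior (a strip with a slit); the boundary of $\Sigma$ maps to $L$, and the asymptotics are an incoming chord $x\in \CW^*(L;H)$, an incoming orbit $y\in \CF^*(K)$, and an outgoing chord $z\in \CW^*(L;H\sharp K)$. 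The Floer datum on $\Sigma$ is chosen so that it restricts to $H\,dt$, $K\,dt$ and $(H\sharp K)\,dt$ on the three ends, using almost complex structures of contact type near $\partial W$.

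The two analytic inputs are: (i) confinement of such Floer solutions inside $W$, which follows from an integrated maximum principle in the spirit of \cite{Gon23} together with the Legendrian condition on $L\cap \partial W$, yielding Gromov--Floer compactness of the relevant moduli spaces; and (ii) the action-energy identity
\[
0\leq E(u)=\mathcal{A}_{H,L}(x)+\mathcal{A}_{K}(y)-\mathcal{A}_{H\sharp K,L}(z)
\]
for every finite-energy solution $u$. A standard continuation and gluing argument then shows that $m$ descends on cohomology to the module map $H^*(W;\bZ_2)\otimes H^*(L;\bZ_2)\to H^*(L;\bZ_2)$ used to define $\alpha\cdot\beta$ (after identifying wrapped and singular cohomology through the relative and absolute PSS isomorphisms), so $m(\sigma,\tau)$ is a cocycle in $\CW^*(L;H\sharp K)$ representing $\alpha\cdot\beta$. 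By the action-energy identity, every generator appearing in $m(\sigma,\tau)$ has action at most $\ell(\beta,H)+c(\alpha,K)+2\epsilon$, whence $\ell(\alpha\cdot\beta,H\sharp K)\leq \ell(\beta,H)+c(\alpha,K)+2\epsilon$. Sending $\epsilon\to 0$ gives the theorem.

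The main obstacle is setting up $m$ rigorously in the wrapped/Liouville setting rather than the closed-monotone case treated in Section~4.3 of \cite{LZ18}. Concretely, one needs chain-homotopy independence of $m$ from the choice of Floer data and from the position of the cylindrical slit on $\Sigma$; compatibility of $m$ with the PSS morphisms on both the absolute and relative sides so that the cohomology-level operation is indeed the module action; and a uniform maximum principle handling all three ends of $\Sigma$ when $H$, $K$ and $H\sharp K$ are linear at infinity with slopes outside the Reeb spectrum of $\partial W$. Once these compatibility and confinement statements are verified by direct adaptation of the wrapped Floer formalism, the filtration estimate above makes the inequality essentially automatic.
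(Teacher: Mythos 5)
Your proposal is correct and follows essentially the same route as the paper: both define the module operation by counting solutions on a strip with an interior puncture (strip with a slit) with asymptotics a chord of $H$, an orbit of $K$, and an output chord of $H\sharp K$, use a no-escape/maximum-principle argument and the action-energy identity to get the filtered map $\HW^*_{<a}(L;H)\otimes\HF^*_{<b}(K)\to\HW^*_{<a+b}(L;H\sharp K)$, and then invoke compatibility with the absolute and relative PSS morphisms to identify the cohomology-level operation with the classical module action, from which the spectral invariant inequality follows. Your optimization over near-optimal cocycle representatives with an $\epsilon$ is just a chain-level phrasing of the paper's filtered-cohomology argument (and the sign in your energy identity is a convention matter), so there is no substantive difference.
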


Now, Theorem~\ref{abs_rel} quickly follows from Theorem~\ref{module-structure}.

\begin{proof} [Proof of Theorem~\ref{abs_rel}] Take Liouville domain $W=D^*_gN$, classes $\alpha=\mathds{1}_N$, $\beta=\mL$, as well as the Hamiltonian $H=0$, then for any $K \in C_c^{\infty}([0,1] \times W)$, we have $$\ell(\mL,K)\leq c(\mathds{1}_N,K) \,\,\,\,\mbox{and}\,\,\,\, \ell(\mL,\overline{K})\leq c(\mathds{1}_N,\overline{K}).$$ Therefore, by definition (\ref{gamma-phi}) and (\ref{gamma-L}), we have $\gamma(\phi_K^1(L))\leq\gamma(\phi_K^1)$ for any admissible Lagrangian $L$. This implies the desired inequality (\ref{abs-rel-leftineq}), where the constants $A, B$ come from the quasi-flat $\phi: (\mathbb{R}^{\infty}, |-|_{\infty}) \to \left(\cL(L), \delta_{\gamma}\right)$ directly.

Now, for any $n \in \bN$, this quasi-flat $\phi$ is also a rank-$n$ quasi-flat. By hypothesis, if it is in a strong sense, then by Definition \ref{dfn-strong-qf}, $\phi$ is induced by a homomorphism $\psi: \mathbb{R}^n \to {\rm Ham}(W, \omega)$ such that $\phi(a) = \psi(a) \cdot L$. Then we have the following estimations, 
\begin{align*}
d_{\gamma}(\psi(a), \psi(b)) & \leq d_{\rm Hofer}(\psi(a), \psi(b)) & \mbox{(by (\ref{gamma-hofer}))}\\
& = \|\psi(a)^{-1} \psi(b)\|_{\rm Hofer}  & \mbox{(by definition)} \\
& = \|\psi(b-a)\|_{\rm Hofer}. & \mbox{(since $\psi$ is a homomorphism)}
\end{align*}
Denote by $e_1, ..., e_n$ the standard basis of $\bR^n$ and write vector $b-a =  \sum_{i=1}^n x_i e_i$ for some $x_i \in \bR$. Then 
\begin{align*}
\|\psi(b-a)\|_{\rm Hofer} &=\left\|\psi\left(\left(\lfloor|x|_{\infty}\rfloor+1\right)\cdot\sum_{i=1}^n\frac{x_i}{\lfloor|x|_{\infty}\rfloor+1}e_i\right)\right\|_{\rm Hofer}\\
& \leq \left(\lfloor|x|_{\infty}\rfloor+1\right) \cdot \left\|\psi \left(\sum_{i=1}^n\frac{x_i}{\lfloor|x|_{\infty}\rfloor+1}e_i\right)\right\|_{\rm Hofer}\\
& \leq (|x|_{\infty}+1)\cdot \max_{|t|_\infty\leq 1}\|\psi(t)\|_{\rm Hofer}\leq A(n) |b-a|_{\infty} + B(n).
\end{align*}
Here, we take constants $A(n) = B(n) = \max_{|t|_\infty \leq 1} \|\psi(t)\|_{\rm Hofer}$, only depending on $n$. As the domain of $\psi$ is $\bR^n$, both $A(n)$ and $B(n)$ are finite. Together with (\ref{abs-rel-leftineq}), we obtain the desired conclusion. \end{proof}

\begin{remark} \label{rmk-a-b} In practice, when the $\psi$ in the proof of Theorem \ref{abs_rel} is chosen good enough (for instance, the construction in Example \ref{ex-Usher}), constants $A(n)$ and $B(n)$ can be simplified as $A(n) = B(n) = 2$, which independent of $n$. Therefore, the same conclusion as in the second part of Theorem~\ref{abs_rel} also holds for $n = \infty$. \end{remark}

As an immediate corollary, we have the following result. 
\begin{cor} \label{cor-sn}
The metric space $\left(\ham(D^*_gS^{n\neq 2},\omega_{\rm{can}}), d_{\gamma}\right)$ contains a rank-$\infty$ quasi-flat.
\end{cor}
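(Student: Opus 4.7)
The plan is to deduce Corollary \ref{cor-sn} by combining the relative results (Theorem \ref{Lag-infinite-flat} and Theorem \ref{sphere-flat}) with the absolute-versus-relative comparison Theorem \ref{abs_rel}, together with the strengthening recorded in Remark \ref{rmk-a-b}. I would split the argument according to whether $n=1$ or $n\geq 2$, and in each case the task reduces to checking that the rank-$\infty$ quasi-flat produced in $\left(\mathcal{L}(F_x),\delta_\gamma\right)$ is \emph{in a strong sense}, i.e.\ comes from a homomorphism $\mathbb{R}^\infty\to\ham(D^*_gS^n,\omega_{\rm can})$ of the type constructed in Example \ref{ex-Usher}.

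For $n\geq 2$, I would first invoke Example \ref{example}: part (1) handles $n\geq 3$ (with $k=0$), and part (2) handles $n=2$ (with $k=2$), so in both ranges the hypotheses (i)--(iii) of Theorem \ref{Lag-infinite-flat} are satisfied for $(S^n,g_{\rm std})$. Thus Theorem \ref{Lag-infinite-flat} yields a rank-$\infty$ quasi-flat $\phi\colon(\mathbb{R}^\infty,d_\infty)\to\left(\mathcal L(F_x),\delta_\gamma\right)$. For $n=1$, the same conclusion is given directly by Theorem \ref{sphere-flat}. In both cases, one notes that the quasi-flat is built from the Usher-type homomorphism of Example \ref{ex-Usher} applied to an appropriately chosen autonomous Hamiltonian $H\co D^*_gS^n\to\mathbb{R}$, so $\phi(a)=\psi(a)\cdot F_x$ where $\psi(a)=\phi^1_{H_a}$ with $H_a=\left(\sum_i a_i f(2^{i+1}s-1)\right)\circ H$; in particular $\psi\co\mathbb{R}^\infty\to\ham(D^*_gS^n,\omega_{\rm can})$ is a group homomorphism, hence the quasi-flat is in a strong sense in the sense of Definition \ref{dfn-strong-qf}.

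Having this strong-sense quasi-flat in $\mathcal L(F_x)$, I would then apply Theorem \ref{abs_rel} to pass from the relative setting on $\mathcal L(F_x)$ to the absolute setting on $\ham(D^*_gS^n,\omega_{\rm can})$. Theorem \ref{abs_rel} provides constants $A\geq 1,\,B\geq 0$ with
\[
\frac{1}{A}|a-b|_\infty-B\leq d_\gamma(\psi(a),\psi(b))
\]
for all $a,b\in\mathbb{R}^\infty$, giving the lower bound for a quasi-isometric embedding. For each fixed finite $n'\in\mathbb{N}$, the second part of Theorem \ref{abs_rel} directly yields a rank-$n'$ quasi-flat in $\ham(D^*_gS^n,\omega_{\rm can})$; to promote this to rank $\infty$, I would invoke Remark \ref{rmk-a-b}, which observes that when $\psi$ is constructed as in Example \ref{ex-Usher}, the constants $A(n'),B(n')$ in the Hofer upper bound can be taken equal to $2$, independently of $n'$. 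With these uniform constants, the Hofer upper bound $d_\gamma(\psi(a),\psi(b))\leq d_{\rm Hofer}(\psi(a),\psi(b))\leq 2|a-b|_\infty+2$ extends from finite-dimensional inputs to $a,b\in\mathbb{R}^\infty$, completing the quasi-isometric embedding $\mathbb{R}^\infty\hookrightarrow\ham(D^*_gS^n,\omega_{\rm can})$.

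The main obstacle I anticipate is \emph{not} the proof structure above, which is a fairly mechanical concatenation, but rather the verification that the quasi-flats delivered by Theorem \ref{Lag-infinite-flat} and Theorem \ref{sphere-flat} are genuinely of the strong-sense form. This amounts to tracing through the constructions used to prove those theorems and confirming that they indeed proceed via the homomorphism $\psi$ of Example \ref{ex-Usher} (as opposed to a mere set-theoretic map into $\mathcal L(F_x)$), and that the Hofer bound $\max_{|t|_\infty\leq 1}\|\psi(t)\|_{\rm Hofer}\leq 2$ invoked in Remark \ref{rmk-a-b} applies with this specific choice of underlying Hamiltonian $H$. Once this bookkeeping is in place, the corollary follows immediately from Theorem \ref{abs_rel} plus Remark \ref{rmk-a-b}.
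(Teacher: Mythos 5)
Your proposal is correct and follows essentially the same route as the paper, whose proof of Corollary \ref{cor-sn} is precisely the citation chain Theorem \ref{Lag-infinite-flat} (with Example \ref{example}) for $n\geq 2$, Theorem \ref{sphere-flat} for $n=1$, then Theorem \ref{abs_rel} together with Remark \ref{rmk-a-b}. The one point you flag as needing verification -- that the quasi-flats are in the strong sense because they are built from the Usher-type homomorphism of Example \ref{ex-Usher}, with the uniform Hofer bound $\|\phi^1_{H_{a-b}}\|_{\rm Hofer}\leq 2\|a-b\|_\infty$ -- is indeed exactly what the paper's constructions in the proofs of Theorems \ref{Lag-infinite-flat} and \ref{sphere-flat} provide.
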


\begin{proof} By Theorem~\ref{thm-Lag-flat},  Example~\ref{example}, Theorem~\ref{abs_rel}, and Remark~\ref{rmk-a-b}. 
\end{proof}

\begin{remark} Note that Theorem~\ref{Ham-infinite-flat} does {\rm not} cover the case where the base manifold $N=S^{n \geq 3}$, while Corollary~\ref{cor-sn}  obtain the expected large scale geometric property for these $S^{n\geq 3}$ indirectly, via Lagrangian cotangent fibers. It would be interesting to find a direct approach for $\ham(D^*_gS^{n\geq 3},\omega_{\rm{can}})$. \end{remark}

\subsection{Method of proofs}\label{sec-method}
The key step in proving Theorem~\ref{Ham-infinite-flat} and Theorem~\ref{thm-Lag-flat} is seeking for a (Floer-theoretic) invariant that serves for the following two purposes:
\begin{itemize}
\item[(a)] this invariant provides a lower bound of spectral norm $\gamma$;
\item[(b)] this invariant detects large-scale geometric properties. 
\end{itemize}
Inspired by works from \cite{Ush13,Ush14} and \cite{KS21}, the ideal candidate is boundary depth $\beta$, derived from the filtration structure of (any) Floer theory that roughly speaking measures the longest time interval that a homological {\it invisible} generator can persist. For more details, especially for $\beta_{\fc}(L_0, L_1; H)$ (boundary depth of wrapped Floer cohomology) and $\beta(H)$ (boundary depth of Hamiltonian Floer cohomology), see Section \ref{wrapped-Floer} and \ref{Ham-Floer}, respectively. In fact, by Corollary 5.4 in \cite{Ush13}, $\beta$ is well-defined on ${\rm Ham}$, but we will not emphasize it here. 

For the purpose (a) above, we have the following result, which verifies the main result Theorem~A in \cite{KS21}, in the setting of a Liouville domain as well as two admissible Lagrangian submanifolds. 
\begin{theorem}\label{betaleqgamma}
  Let $W$ be a Liouville domain and $L_0, L_1\subset W$ be two non-intersecting admissible Lagrangian submanifolds, then we have the following inequalities,
    $$\beta_{\fc}(L_0, L_1;H)\leq\gamma(\phi_H^1(L_0)) \quad \mbox{and} \quad \beta(H)\leq\gamma(\phi_H^1)$$
    for any $H \in C^{\infty}_c([0,1] \times W)$ and $\fc\in\pi_0(\cP(\widehat{L}_0, \widehat{L}_1))$. 
\end{theorem}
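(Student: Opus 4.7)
The strategy, following Kislev-Shelukhin \cite{KS21}, is to translate both inequalities into the language of persistence modules (barcodes) attached to the action-filtered wrapped Floer complex $\CW^*_{\leq t}(L,H)$ and the action-filtered Hamiltonian Floer complex $\CF^*_{\leq t}(H)$. In this language, $\beta(L,H)$ and $\beta(H)$ are the lengths of the longest finite bars in the respective barcodes, while $\ell(\mL,H)$ and $c(\mM,H)$ are the filtration levels at which the bars carrying the unit classes are born (via the PSS maps from $H^*(L)$ and $H^*(W)$). The assertion thus reduces to a purely algebraic fact: the longest finite bar in a persistence module equipped with a suitable duality is bounded by the sum of the birth levels of the unit in the forward and reversed complexes.

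My plan is to establish, as the technical heart of the proof, a filtered Poincar\'e-type duality identifying the barcode of $\{\HW^*_{\leq t}(L,H)\}_t$ with an appropriate reflection of the barcode of $\{\HW^*_{\leq t}(L,\overline H)\}_t$, sending each finite bar $[a,b)$ to a bar $[-b,-a)$ and matching the unit bar of one side with that of the other; and similarly for $\{\HF^*_{\leq t}(H)\}_t$ versus $\{\HF^*_{\leq t}(\overline H)\}_t$. This is the Liouville/wrapped analogue of the closed-case duality $\HF^*(H)\cong \HF^{2n-*}(\overline H)$ used in \cite{KS21}. It can be realized by a chain-level comparison induced by the involution $H\mapsto \overline H$ on admissible Hamiltonians, set up first on action-truncated subcomplexes and then passed to the direct limit along the cofinal family of linear-at-infinity Hamiltonians defining $\HW^*$ and $\HF^*$.

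With the duality in hand, the algebraic argument of \cite{KS21} applies almost verbatim: a finite bar of length $\ell$ at position $[a,a+\ell)$ in the barcode of $H$ produces, through duality, a mirror finite bar $[-a-\ell,-a)$ in the barcode of $\overline H$. Since the unit class is never a coboundary at its birth level, comparing $\ell(\mL,H)$ and $\ell(\mL,\overline H)$ (resp.\ $c(\mM,H)$ and $c(\mM,\overline H)$) with the endpoints of this mirror pair forces $\ell \leq \ell(\mL,H)+\ell(\mL,\overline H) = \gamma(\phi^1_H(L))$, and the analogous bound $\gamma(\phi^1_H)$ in the Hamiltonian case.

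The main obstacle is the filtered duality itself: in the closed case it follows from finite-dimensional Poincar\'e duality on each graded piece, but here the complexes are infinite-dimensional and unbounded above in action, so one cannot naively dualize. One must work on action-truncated subcomplexes, where chain-level duality does hold, then pass to the direct limit along a cofinal family of admissible Hamiltonians, carefully tracking the shift of the action functional under $H\mapsto \overline H$ and ensuring that the admissibility of $L$ at the contact boundary is preserved throughout. Once this bookkeeping is done, the purely algebraic reflection argument of \cite{KS21} closes out the proof.
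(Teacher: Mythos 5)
Your reduction to barcodes is fine as far as terminology goes, but the mechanism you propose does not prove the inequality. The decisive step in your sketch --- ``comparing $\ell(\mL,H)$ and $\ell(\mL,\overline{H})$ with the endpoints of the mirror bar forces $\ell\leq \ell(\mL,H)+\ell(\mL,\overline{H})$'' --- is a non sequitur: knowing where the unit class is born in the barcodes of $H$ and $\overline{H}$ places no formal constraint on the length or position of an arbitrary finite bar, even granting a perfect reflection duality. The actual content of Theorem A in \cite{KS21}, and of the paper's proof here, is multiplicative, not dual: one takes cocycles $x,y$ representing $PSS(\mL)$ in $\CW^0(L,G\sharp\overline{F})$ and $\CW^0(L,F\sharp\overline{G})$ with actions $a=\ell(\mL,G\sharp\overline{F})$, $b=\ell(\mL,F\sharp\overline{G})$, uses the energy estimate for the triangle product to see that $x\ast-$ and $y\ast-$ are filtered maps shifting filtration by $a$ and $b$, and uses the homotopy $\mu_3$ (associativity up to filtered chain homotopy, Lemma \ref{lemma-htp} of the paper) to identify the two compositions with the persistence shift maps $\iota_{a+b+\varepsilon}$. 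This produces an $\frac{a+b+\varepsilon}{2}$-interleaving between $\HW^*(L,F)$ and a shifted copy of $\HW^*(L,G)$; since the boundary depth is controlled by the quasiequivalence/interleaving distance (Proposition 3.8 of \cite{Ush13} together with Corollary A.3 of \cite{UZ16}) and $\beta(L;0)=0$, taking $G=0$ gives $\beta(L;F)\leq\gamma(\phi^1_F(L))$. Without this interleaving input, no amount of reflection symmetry bounds the finite bars: a barcode can have its unit born at level $0$ and still contain an arbitrarily long finite bar.

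Moreover, the ``technical heart'' you propose is itself problematic in the Liouville setting, which is precisely why the paper's argument avoids duality altogether. For admissible extensions, $\overline{H}$ of a positive-slope Hamiltonian has negative slope and is not admissible, so the involution $H\mapsto\overline{H}$ does not act on the class of Hamiltonians used to define $\HW^*$ and $\HF^*$; for compactly supported $H$ one extends both $H$ and $\overline{H}$ with small positive slope, and then there is no chain-level action-reversing identification of the two complexes. Even in the closed case, Poincar\'e duality pairs the unit with the point class, not the unit with the unit, so ``matching the unit bar of one side with that of the other'' is not what any duality would give; in the open case the relevant pairing would moreover exchange $H^*(W)$ with $H^*(W,\partial W)$ (respectively $H^*(L)$ with $H^*(L,\partial L)$), which are different groups. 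So both the duality you want to establish and the concluding algebraic step need to be replaced; the product-plus-$\mu_3$ interleaving argument is the route that actually closes the proof.
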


For the purpose (b) above, we will modify constructions in both \cite{Ush13} (absolute case) and \cite{Ush14} (relative cases) to obtain large boundary depth $\beta$ which can be comparable with the $|-|_{\infty}$ in $\bR^{\infty}$. More explicitly, for absolute case, we will consider the function $H_a$ constructed in Example \ref{ex-Usher} for any $a \in \bR^{\infty}$ and follow the approach in \cite{Ush13} to show that, for any $\delta>0$, there is a Morse function $G$ with $\|G-H_a\|_{C^0}<\delta$, and 
\begin{equation} \label{large-beta}
\beta_{\rm Morse}(G)\geq-\min_{i\in\bN} a_i
\end{equation}
where $\beta_{\rm Morse}$ is the boundary depth of the filtered Morse cohomology of $G$. 
The condition in Theorem~\ref{Ham-infinite-flat} ensure that $\beta(G)=\beta_{\rm Morse}(G)$, then we have $|\beta(H_a)-\beta_{\rm Morse}(G)|=|\beta(H_a)-\beta(G)|\leq\|H_a-G\|_{\rm Hofer}<2\delta$. Therefore, (\ref{large-beta}) implies that for any $a\in\bR^\infty$, $\beta(H_a)\geq -\min_{i\in\bN} a_i$. 

For relative case, denote by $\beta_{\fc}(F_{x_0},F_{x_1};H)$ the boundary depth of the filtered Lagrangian Floer cohomology defined as in Section 2 in \cite{Ush14} (for two different fibers $F_{x_0}$ and $F_{x_1}$), with a prescribed homotopy class $\fc$ and Hamiltonian $H$. By Proposition 4.3 in \cite{Ush14}, for  any $a\in\bR^\infty$ and a fixed  constant $C>0$, we obtain $\beta_{\fc}(F_{x_0},F_{x_1};H_{a})\geq \|a\|_{\infty}-C$.

\subsection{Summary of current art} So far, the investigation of large-scale geometric properties in either Hamiltonian diffeomorphism group ${\rm Ham}$ or the ${\rm Ham}$-orbit space of a Lagrangian submanifold has made tremendous progresses. In particular, as a more difficult direction (compared with Hofer's metric $d_{\rm Hofer}$) in terms of the spectral norm (metric) $\gamma$ (or $d_{\gamma}$), new phenomena have been discovered. To end this section, let us summarize the current results into a table, where the {\bf bold} parts indicate the results in this paper. 
\vspace{-2mm}
\begin{table}[H]
\centering
\captionsetup{position=bottom}
\begin{tabular}{c|c|c}
                                        & $d_{\rm{Hofer}}$ and $\delta_{\rm Hofer}$  & $d_{\gamma}$ and $\delta_{\gamma}$  \\ \hline
\multirow{3}{*}{absolute}               & \begin{tabular}[c]{@{}l@{}}Some $\ham(M,\omega)$ contain \\ a  rank-$\infty$  quasi-flat (\cite{Ush13,PS23}) \end{tabular} &     \begin{tabular}[c]{@{}l@{}}
   Some $\ham(M,\omega)$ contain \\ a rank-$\infty$  quasi-flat (\cite{KS21})\end{tabular}\\ \cline{2-3} 
   &
   \multirow{2}{*}{\textcolor{black}{\begin{tabular}[c]{@{}l@{}}
     $\ham(D_g^*N,\omega_{\rm can})$ contains \\ 
     a rank-$\infty$ quasi-flat for \\
     some closed manifold $N$\\  
     (Theorem~\ref{Ham-infinite-flat}, \cite{Mil01,Ush14})
\end{tabular}} } &\begin{tabular}[c]{@{}l@{}} $\ham(W,\omega)$ is unbounded\\ for any Liouville domain  $W$\\ with ${\rm SH}^*(W)\ne 0$  (\cite{Mai22})     \\
\end{tabular}         \\ \cline{3-3} 
                                        &
                                      & {~\bf\begin{tabular}[c]{@{}l@{}}
$\bm{\ham(D^*_gN,\omega_{\rm{can}})}$ contains \\ a  rank-$\bm{\infty}$  quasi-flat for \\ some closed  manifold $\bm{N}$ \\ (Theorem~\ref{Ham-infinite-flat})
\end{tabular} }\tikzmark{end}          \\ \hline
\multirow{2}{*}{\begin{tabular}[c]{@{}c@{}}relative \\ (fiber $F_x$)\end{tabular} } & \multirow{2}{*}{\begin{tabular}[c]{@{}l@{}}$\cL(F_q)$ contains a rank-$\infty$ \\ quasi-flat for some closed\\ manifold $N$ (\cite{Ush14})\end{tabular}    } &   \begin{tabular}[c]{@{}l@{}}
  $\cL(F_x)$ is unbounded for \\ any  closed $N$  by \cite{Gon24}    \\
\end{tabular}        \\ \cline{3-3} 
                                        &                   & {\bf\begin{tabular}[c]{@{}l@{}}
$\bm{\cL(F_x)}$ contains a  rank-$\bm{\infty}$ \\ quasi-flat  
for some closed \\  $\bm{N}$ (Theorem~\ref{thm-Lag-flat})\end{tabular} }    \tikzmark{start}          \\ \hline
\begin{tabular}[c]{@{}c@{}}relative\\ (0-section $0_N$) \end{tabular}             &  \begin{tabular}[c]{@{}l@{}}
$\cL(0_N)$ contains a rank-$\infty$\\ quasi-flat for any closed \\ manifold $N$ (\cite{Mil01})
\end{tabular}&  \begin{tabular}[c]{@{}l@{}}  $\cL(0_N)$ is bounded for some \\ closed manifold $N$ (\cite{She22b}) \\ (cf.~Viterbo's conj. \cite{Vit08})  \end{tabular}         \\ 
\end{tabular}
    \caption{Current results of large-scale geometry properties}
\begin{tikzpicture}[overlay, remember picture]
  \draw[->, thick,  bend right] (pic cs:start) to node[pos=0.9, right, xshift=0.7cm, rotate=-90 ] {{\bf Theorem~\ref{abs_rel}}} (pic cs:end);
\end{tikzpicture}
\end{table}

\vspace*{-8mm}

\noindent Note that only the right-down corner, i.e., the spectral metric $\delta_{\gamma}$ on the $ {\rm Ham}$-orbit space of the zero-section $\mathcal L(0_N)$ does {\it not} admit the large-scale geometric property, which is deeply reflected by the celebrated Viterbo's conjecture in \cite{Vit08}.

Let us also mention a technical point for the box in the table above on $d_{\rm Hofer}$ of $\mathcal L(0_N)$: our definition of $\mathcal L(L)$ in (\ref{Lag-orb}) requires that any element in $\mathcal L(L)$ comes as an image of a compactly supported Hamiltonian diffeomorphism on (some fixed) rescaling of $D_g^*N$, where in \cite{Mil01} the quasi-flat is constructed via a family of elements in ${\rm Ham}(T^*N, \omega_{\rm can})$ with arbitrarily large compact supports in $T^*N$.

\subsection*{Acknowledgements} The project was initiated by conversations with Wenmin Gong in summer 2023 when he visited the Institute of Geometry and Physics at USTC, so we are grateful to useful conversations with him. Insightful communications with Michael Usher help to make progresses on this subject. The first author is partially supported by NSFC No.~123B1024. The second author is partially supported by National Key R\&D Program of China No.~2023YFA1010500, NSFC No.~12301081, NSFC No.~12361141812, and USTC Research Funds of the Double First-Class Initiative. We also thank anonymous referee for useful suggestions and corrections.

\section{Various ingredients in Floer theories} \label{sec-2}
\subsection{Hamiltonian Floer theory}\label{Ham-Floer}
In this section, we briefly recall the basic setting of the Hamiltonian Floer theory on  Liouville domains. We refer \cite{BK22} and \cite{Mai22} for more details.

Let $(W^{2n},\omega=d\theta)$ be a Liouville domain. Thus $W$ is a compact manifold with boundary, with $\theta\in\Omega^1(W)$ such that $d\theta$ is symplectic, and there exists a Liouville vector field $V_\theta$ defined on $W$ such that $\iota_{V_\theta}\omega=\theta$. This vector field $V_\theta$ points outside and is transverse to the boundary $\partial W$, which implies that $\alpha\coloneqq\theta|_{\partial W}$ is a contact form on $\partial W$. Denote the flow of $V_\theta$ by $\phi_{V_\theta}^t$. We can extend $W$ to a complete manifold by setting 
\begin{equation} \label{completion}
\widehat{W}\coloneqq W\bigcup (1,\infty)\times\partial W.
\end{equation}
Here, we identify $(1,\infty)\times\partial W$ with $\bigcup_{r \geq 1} \phi^{\log r}_{V_\theta}(\partial W)$. Then the one-form $\theta$ is extended to $\widehat{W}$ by defining $\theta=r\alpha$ for $r\in[1,\infty)$. Consequently,  $(\widehat{W},d\theta)$ becomes a non-compact exact symplectic manifold. 

A Hamiltonian orbit of a smooth Hamiltonian function $H\in C^\infty([0,1]\times\widehat{W})$ is an orbit $x:[0,1]\ra\widehat{W}$ with $\dot{x}(t)=X_H(x(t))$ and $x(0)=x(1)$. The set of contractible $1$-periodic Hamiltonian orbits of $H$ is denoted by $\cP(H)$. An orbit $x$ is said to be non-degenerate if $\det((\phi_H^1)_*|_{T_{x(0)}W}-\mathds{1})\ne 0$. We call a Hamiltonian $H$ non-degenerate if all elements in $\cP(H)$ are non-degenerate. 

For any contractible orbit $x$, we can associate $x$ to a capping $u:\bD\ra\widehat{W}$ such that $u(e^{2\pi i t})=x(t)$, $t\in[0,1]$, where $\bD=\{z\in\bC:|z|\leq 1\}$. We now define a functional $\cA_H:\cL\widehat{W}\ra \bR$ for the pair $(x,u)$, where $\cL\widehat{W}$ denotes the space of contractible orbits in $\widehat{W}$, by
$$\cA_H(x)\coloneqq -\int_{\bD}u^*d\theta+\int_0^1H(x(t))dt,$$
where $u$ is any capping of $x$. By direct computation and Stokes' theorem,
\begin{align*}
    \cA_{H}(x)&=-\int_{\bD}du^*\theta+\int_0^1H(x(t))dt =-\int_0^1x^*\theta+\int_0^1H(x(t)).
\end{align*}
Then $\cA_H(x)$ is independent of the choice of the capping $u$. The set of critical points of $\cA_H$ is just $\cP(H)$.

Recall that the Reeb vector field $R_\alpha$ of $(\partial W,\alpha)$ is determined by
$$
d\alpha(R_\alpha,\cdot)=0\quad\text{ and }\quad \alpha(R_\alpha)=1.
$$
A closed Reeb orbit in $(\partial W,\alpha)$ is a closed orbit $\gamma:[0,T]\ra \partial W$ satisfying $\dot{\gamma}(t)=R_\alpha(\gamma(t))$ and $\gamma(0)=\gamma(T)$. Then the set of periods of all closed  Reeb orbits is denoted by $\spec(\partial W,\alpha)$, which is known to be a closed nowhere dense set in $(0,+\infty)$.  Moreover, if we take $H=h(r)$ on $(r_0,\infty)\times \partial W$ for some $r_0 \geq 1$, then  the Hamiltonian vector field has the form $X_H(r, x)=h'(r)R_\alpha(x)$ on $(r_0,\infty)\times \partial W$. Any Hamiltonian orbit $x$ of $H$ in $(r_0,\infty)\times \partial W$ is restricted to $\{r_1\}\times\partial W$ for some $r_1>r_0$ and corresponds to the Reeb orbit $\gamma(t)=x(t/T)$ with period $T=|h'(r_1)|$.  If the orbit $x$ lies in $(1,\infty)\times\partial W$ and $H=h(r)$ on $(1,\infty)\times \partial W$, then
$$\cA_H(x)=-rh'(r)+h(r),$$
that is, the action of $x$ is equal to the $y$-intercept of the tangent line of the function $y=h(r)$ at $r$. Therefore, we call $H\in C^\infty([0, 1]\times \widehat{W} )$ an {\it admissible} Hamiltonian if $H$ has the form
$$
H(t,r,x)=\mu_Hr+a\quad\text{ on }[1,\infty)\times \partial W,
$$
where $\mu_H\notin  \spec(\partial W,\alpha)$ is a non-negative number (called the slope of $H$). We denote by $\cH$ the set of admissible Hamiltonians. 

Consider a family of smooth $t$-dependent almost complex structures $J_t$ on $\widehat{W}$, where $t$ ranges from 0 to 1. These almost complex structures are compatible with the symplectic form $d\theta$, which means that $J^2=-\mathds{1}$ in ${\rm End}(TW)$ and the pairing $\langle \cdot,\cdot\rangle=d\theta(\cdot,J_t\cdot)$ defines a family of Riemannian metrics on $\widehat{W}$. An almost complex structure $J$ is said to be of contact type on $[r_0,\infty)\times\partial W$ for a given $r_0>0$ if $dr\circ J=-\theta$ for $r\geq r_0$. Let $\mathcal{J}$ denote the set of smooth families $(J_t)_{t\in[0,1]}$ of compatible almost complex structures that are of contact type on $[1,\infty)\times \partial W$. This assumption will simplify our calculation. Now we consider the connecting trajectories to define the differential in the Floer complex. Given $H\in\cH$ and $J\in\cJ$, consider  solutions $u:\bR\times S^1\ra \widehat{W}$ satisfying
$$ \begin{cases}
    \partial_su+J^s_t(\partial_tu-X_{H_t}(u))=0\\
     \lim_{s\ra-\infty}u(s,t)=x\\
     \lim_{s\ra+\infty}u(s,t)=y 
\end{cases}$$
where $x,y$ are Hamiltonian orbits of $H$. Denote the space of the above solutions $u$ by $\widehat{\cM}_{H,J}(x,y)$. There is an $\bR$-translation in the $s$-direction on $\widehat{\cM}_{H,J}(x,y)$, and denote the quotient space by $\cM_{H,J}(x,y)=\widehat{\cM}_{H,J}(x,y)/\bR$. For regular pair $(H,J)$, the space $\widehat{\cM}_{H,J}(x,y)$ is a smooth manifold with dimension
$$\dim\widehat{\cM}_{H,J}(x,y)=\CZ(y)-\CZ(x),$$
where $\CZ(x)$ denotes the Conley-Zehnder index of $x$ (\cite{RS93}). Then the Floer complex is defined by $(\CF^*(H)=\bigoplus_{x\in\cP(H)}\bZ_2\cdot x,d_{H,J})$. The degree of $x\in\cP(H)$ is defined by 
$$|x|=\dfrac{1}{2}\dim\widehat{W}-\CZ(x).$$
The Floer differential $d_{H,J}:\CF^*(H)\ra \CF^{*+1}(H)$ is defined by  
$$d_{H,J}(y)=\sum_{|x|=|y|+1}\sharp_{\bZ_2}\cM_{H,J}(x,y)\cdot x$$
which satisfies $d^2_{H,J}=0$. 

The energy of the connecting trajectory $u$  is defined as 
\begin{align*}
    E_J(u)&=\dfrac{1}{2}\int_0^1\int_0^1|\partial_su|^2_J+|\partial_tu-X_H(u)|^2_Jdsdt\\
    &=\cA_{H}(x)-\cA_{H}(y)\geq 0,
\end{align*}
where $|\cdot|_J$ is the norm with respect to the metric $\omega(\cdot,J\cdot)$. 
We can define a filtration $\ell_{H}$ by 
$$\ell_{H}\left(\sum_{i}a_i x_i\right)\coloneqq\max\{-\cA_{H}(x_i)\mid a_i\ne 0\},$$
where $x_i\in \cP(H)$, then for any $x, y \in {\rm CF}^*(H)$ with $dy = x$, we have $\ell_{H}(y)\geq\ell_{H}(x)$. Then the Floer complex $(\CF^*(H),d_{H,J},\ell_{H})$ is a Floer-type complex (\cite{UZ16}). The Floer cohomology for $H$ is defined to be the quotient space $\ker(d_{H,J})/{\rm im}(d_{H,J})$ which is denoted by $\HF^*(H)$. 

Now we consider the filtered Floer complex. For $a\in\bR\cup\{\pm\infty\}$, we define 
$$\CF^k_{<a}(H)\coloneqq\bigoplus_{\substack{x\in\cP(H)\\|x|=k,\ell_H(x)<a}}\bZ_2\left<x\right>.$$
As $d_{H,J}$ decreases the filtration, the restriction $d_{<a}:\CF^k_{<a}(H)\ra \CF^{k+1}_{<a}(H)$ of the differential is well-defined and $(\CF^*_{<a}(H),d_{<a})$ is a subcomplex of $(\CF^*(H),d_{H,J})$. For $a,b\in\bR\cup\{\pm\infty\}$, we can define the Floer complex in the filtration window $(a,b)$ as the quotient 
$$\CF^{*}_{(a,b)}(H)\coloneqq \CF^*_{<b}(H)/\CF^*_{<a}(H),$$
and we denote the projection of the differential by
$$d_{(a,b)}:\CF^k_{(a,b)}(H)\ra \CF^{k+1}_{(a,b)}(H).$$ 
Then for $a,b,c\in\bR\cup\{\pm\infty\}$, we have the short exact sequence as follows:
$$0 \rightarrow \CF^*_{(a,b)}(H) \xrightarrow{\iota_{(a,b)}^{(a,c)}} \CF^*_{(a,c)}(H)\xrightarrow{\pi_{(a,c)}^{(b,c)}} \CF^*_{(b,c)}(H) \ra 0$$
where $\iota_{(a,b)}^{(a,c)}$ denotes the inclusion and $\pi_{(a,c)}^{(b,c)}$ denotes the projection.

We can define the Floer theory of compactly supported Hamiltonians on Liouville domains $W$ by extending to the linear functions on $\widehat{W}\backslash W$. More precisely, for each $H\in C^\infty_c([0,1]\times W)$, we take a regular $\widehat{H}\in\cH$ such that $\widehat{H}|_W$ is a $C^2$-small perturbation of $H$ and $\mu_H<\min\spec(\partial W,\alpha)$. The filtered Floer cohomology of $H$  on $\widehat{W}$ is defined as $$\HF^*_{(a,b)}(H)\coloneqq\HF^*_{(a,b)}(\widehat{H}).$$
Since we take the slope smaller than the minimum Reeb period to define $\HF^*_{(a,b)}(H)$, the definition above  doesn’t depend on the choice of $\widehat{H}$ (\cite{Vit99}). 
Then the boundary depth of such an $H$ is defined by
$$\beta(H)\coloneqq\sup\left\{a\in\bR\,\left|\, \exists t\in\bR,\, \iota_{(-\infty,t)}^{(-\infty,t+a)}\left(\ker{\iota_{(-\infty,t)}^{(-\infty,+\infty)}}\right)\ne 0\right\}\right.=\beta(\widehat{H}).$$
The spectral invariant for $0\ne\alpha\in H^*(M)$ is defined by $$c(\alpha,H)\coloneqq\inf\{a\in\bR\mid \pi_{(-\infty,+\infty)}^{(a,+\infty)}\circ PSS_H(\alpha)=0 \}=c(\alpha,\widehat{H}),$$
where $PSS_H$ is the PSS map defined in Section \ref{PSS}.
We read the invariants about the regular $H\in \cH$ from the Floer theory. As the regular cases are generic, then any disregarded part can be recovered through $C^0$-continuity.

\subsection{Wrapped Floer cohomology}\label{wrapped-Floer}
To define the wrapped Floer homology, we consider certain well-behaved Lagrangian submanifolds. For more details, see \cite{Rit13,Gon24}.
\begin{dfn}\label{admissible-Lag}
    Let $L^n\subset(W,d\theta)$ be a connected exact Lagrangian submanifold with the Legendrian boundary $\partial L=L\cap\partial W$ such that the Liouville vector field is tangent to $TL$ along the boundary. We call such $L$ an {\rm admissible Lagrangian} if $\theta|_L=dk_L$ for a function $k_L\in C^\infty(L,\bR)$ which vanishes in a neighborhood of the boundary $\partial L$, and the relative Chern class $c_1(W,L)\in H^2(W,L;\bZ)$ satisfies $2c_1(W,L)=0$.
\end{dfn}
Similarly to (\ref{completion}), one can extend admissible $L$ to an exact non-compact Lagrangian in $\widehat{W}$ by setting
$$\widehat{L}\coloneqq L\bigcup\phi^t_{V_\theta}(\partial L)$$
and setting $k_L=0$ on $\widehat{L}\backslash L$.
\begin{ex}\label{disk-conormal}
    Let $D_g^*N$ be the disk cotangent bundle of a closed manifold $(N,g)$. The disk conormal bundle
    $$\nu^*K=\left\{(q,p)\in D_g^*N|_K \, | \, p(v)=0, \forall v\in T_qK \right\}$$
    of a closed submanifold $K\subset N$ is an admissible Lagrangian with $k_L=0$. If we take $K=\{x\}$, then $F_x$ is an admissible Lagrangian (called the fiber at $x$) with $k_{F_x}=0$.
\end{ex}

Fixing two admissible Lagrangians $L_0, L_1\subset W$, a Reeb chord of period $T$ is a map $\gamma:[0,T]\to\partial W$ satisfying
$$
\dot{\gamma}(t)=R(\gamma(t))\quad\hbox{and}\quad \gamma(0)\in\partial L_0,\, \gamma(T)\in\partial L_1.
$$
Then the set of periods of Reeb chords is denoted by $\spec(\partial L_0,\partial L_1; \theta)$, which is known to be a closed nowhere dense subset in $(0,+\infty)$. 
A Hamiltonian chord of a smooth Hamiltonian function $H\in C^\infty([0,1]\times \widehat{W})$ is a chord with properties
$$
\dot{x}(t)=X_H(x(t)) \quad\hbox{and}\quad x(0)\in\widehat{L}_0,\, x(1)\in \widehat{L}_1,
$$
where $X_H$ is the Hamiltonian vector field given by $d\theta(X_H,\cdot)=-dH_t$ with $H_t\coloneqq H(t,\cdot)$. Clearly,  Hamiltonian chords of $H$ correspond to the intersection points in $\phi^1_H(\widehat{L}_0)\cap \widehat{L}_1$ where $\phi_H^1$ is the time one map of the flow of $X_H$. 

Moreover, for Hamiltonian $H$ with $H=h(r)$ on $(r_0,\infty)\times \partial W$, any Hamiltonian chord $x$ of $H$ in this region is restricted to $\{r_1\}\times\partial W$ for some $r_1>r_0$ with $H(x(t))=h(r_1)$, and corresponds to the Reeb chord $\gamma(t)=x(t/T)$ with period $T=|h'(r_1)|$. Therefore, we call $H\in C^\infty([0,1]\times \widehat{W})$ an {\it admissible Hamiltonian} if $H$ has the form
\begin{equation} \label{extension-H}
H(t,r,x)=\mu_Hr+a\quad\text{ on }[1,\infty)\times \partial W.
\end{equation}
Here, $\mu_H\notin  \spec(\partial L_0, \partial L_1,\theta)$ is a non-negative scalar known as the slope of $H$.  We denote by $\cH$ the set of all admissible Hamiltonians.  

Now we follow \cite{Ush14} to define the Floer complex $(\CW^*_\fc(L_0,L_1;H),\partial)$ for  Liouville domain $W$ and admissible Lagrangians $L_0, L_1$, where $\fc\in\pi_0(\mathcal{P}(\widehat{L}_0,\widehat{L}_1))$ and $H\in C^\infty_c([0,1]\times \widehat{W})$. For each $\fc\in\pi_0(\mathcal{P}(\widehat{L}_0,\widehat{L}_1))$, fix a path $\gamma_\fc:[0,1]\ra \widehat{W}$ which represents the class $\fc$. Denote $\mathcal{P}_\fc(\widehat{L}_0,\widehat{L}_1)$ as the space of paths  representing the class $\fc$ from $\widehat{L}_0$ to $\widehat{L}_1$. The generator of $\CW^*_\fc(L_0,L_1;H)$ is the critical points of the function $\cA_{L_0,L_1;H}:\mathcal{P}_\fc(\widehat{L}_0,\widehat{L}_1)\ra\bR$ defined by 
\[\cA_{L_0,L_1;H}(\gamma)\coloneqq -\int_{[0,1]^2}u^*d\theta+\int_0^1H(t,\gamma(t))dt-\int_{0}^{1}\gamma_{\fc}^*\theta
\]
where $u:[0,1]^2\ra \widehat{W}$ is a capping of $\gamma$ with $u(0,\cdot)=\gamma_\fc(\cdot), u(1,\cdot)=\gamma(\cdot), u(\cdot, 0)\in\widehat{L}_0$ and $u(\cdot, 1)\in\widehat{L}_1.$ This is slightly different from the usual definition as in \cite{Ush14}. We add the last term to remove the dependence of the choice of $\gamma_{\fc}$, which is helpful to estimate the energy of the connecting strip in Section~\ref{section-product}. Each critical point $\gamma$ of $\cA_{L_0,L_1;H}$ satisfies $\gamma'(t)=X_H(t,\gamma(t))$, and they are in bijection with the intersection points $\gamma(0)\in\widehat{L}_0\cap(\phi_H^1)^{-1}(\widehat{L}_1)$. Note that we define the functional $\cA_{L_0,L_1;H}(\gamma)=\cA_{L_0,L_1;H}((\gamma,u))$, we need to show that $\cA_{L_0,L_1;H}(\gamma)$ is independent of the choice of $u$. By direct computations and Stokes' theorem,
\begin{align*}
    \cA_{L_0,L_1;H}(\gamma)=&-\int du^*\theta+\int_0^1H(\gamma(t))dt-\int_{0}^{1}\gamma_{\fc}^*\theta\\
    =&-\int_0^{1}\gamma^*\theta+\int_0^1u(1,\cdot)^*\theta-\int_0^1u(0,\cdot)^*\theta+\int_0^1H(\gamma(t))dt\\
    =&-\int_0^{1}\gamma^*\theta+\int_0^1u(1,\cdot)^*dk_{L_1}-\int_0^1u(0,\cdot)^*dk_{L_0}+\int_0^1H(\gamma(t))dt\\
    =&-\int_0^1 \gamma^*\theta+\int^1_0 H(\gamma(t))dt+k_{L_1}\big(\gamma(1)\big)-k_{L_0}\big(\gamma(0)\big)\\
    &-k_{L_1}\big(\gamma_{\fc}(1)\big)+k_{L_0}\big(\gamma_{\fc}(0)\big).
\end{align*}
where $\theta=dk_{L_i}$ on $\widehat{L_i}$, $i=0, 1$. If we take the Liouville domain $W=D_g^*N$ for a closed manifold $N$ and $L_0, L_1$ the disk conormal bundle in Example \ref{disk-conormal}, then $k_{L_i}=0$, and $\cA_{L_0,L_1;H}(\gamma)=-\int_0^1\gamma^*\theta+\int_0^1H(\gamma(t))dt.$
The set of critical points of $\cA_{L_0,L_1;H}$ is denoted by ${\rm Crit}_{\fc}(L_0,L_1;H)$ for each homotopy class $\fc$.

A chord $x$ is said to be non-degenerate if the vector spaces $T_{x(1)}\widehat{L}_1$ and $d\phi_H^1(T_{x(0)}\widehat{L}_0)$ are transverse. We call an admissible Hamiltonian $H\in\cH$ non-degenerate with respect to $\widehat{L}_0$ and $\widehat{L}_1$ if all chords $x$ between $\widehat{L}_0$ and $\widehat{L}_1$ are non-degenerate. 

The wrapped Floer complex is defined by 
\begin{equation}\label{eq-chain-complex}
\CW^*_{\fc}(L_0,L_1;H)\coloneqq\bigoplus_{x\in{\rm Crit}_{\fc}(L_0,L_1;H)}\bZ_2 \left<x\right>.
\end{equation}
To define the differential, we need to consider the connection trajectories.   
Given $J\in\cJ$ and $H\in\cH^{reg}$, consider the connecting trajectory $u:\bR\times[0,1]\ra \widehat{W}$ satisfy
$$\begin{cases}
    \partial_su+J^s_t(\partial_tu-X_{H_t}(u))=0,\\
        u(\bR,0)\subset\widehat{L}_0, \quad u(\bR,1)\subset\phi_H^1(\widehat{L}_1),\\
     \lim_{s\ra-\infty}u(s,t)=x,\quad\lim_{s\ra+\infty}u(s,t)=y.   
\end{cases}$$
Denote the space of the above solutions $u$ of the finite energy by $\widehat{\cM}_{H,J}(x,y)$. There is a $\bR$-translation in the $s$-direction on $\widehat{\cM}_{H,J}(x,y)$, and the quotient space $\cM_{H,J}(x,y)=\widehat{\cM}_{H,J}(x,y)/\bR$. For regular pair $(H,J)$, the space $\widehat{\cM}_{H,J}(x,y)$ is a smooth manifold with dimension
$$\dim\widehat{\cM}_{H,J}(x,y)=\mu(x)-\mu(y),$$
where $\mu$ is the index of the chord.

Then the Floer differential is defined by  
$$d_{H,J}:\CW^*_{\fc}(L_0,L_1;H)\ra \CW^*_{\fc}(L_0,L_1;H)$$
$$d_{H,J}(y)=\sum_{\mu(x)=\mu(y)+1}\sharp_{\bZ_2}\cM_{H,J}(x,y)\cdot x.$$
This map has $d^2_{H,J}=0$, and therefore $\CW^*_{\fc}(L_0,L_1;H)$ is a chain complex over $\bZ_2$. The energy of the connect trajectory $u$ is defined as 
$$E_J(u)\coloneqq\dfrac{1}{2}\int_0^1\int_0^1|\partial_su|^2_J+|\partial_tu-X_H(u)|^2_Jdsdt$$
where $|\cdot|_J$ is the norm with respect to the metric $\omega(\cdot,J\cdot)$. One verifies that $E(u)=\cA_{L_0,L_1;H}(x)-\cA_{L_0,L_1;H}(y)\geq 0.$ We can define a filtration $\ell_{L_0,L_1;H}$ by 
\begin{equation}\label{eq-fil}
\ell_{L_0,L_1;H}\left(\sum_{i}a_i x_i\right)\coloneqq\max\{-\cA_{L_0,L_1;H}(x_i)\mid a_i\ne 0\}
\end{equation}
where $x_i\in {\rm Crit}_{\fc}(L_0,L_1;H)$, 
then for any $x,y\in\CW^*_{\fc}(L_0,L_1;H)$ with $dy=x$, we have $\ell_{L_0,L_1;H}(y)\geq\ell_{L_0,L_1;H}(x)$. The wrapped Floer complex $(\CW^*_{\fc}(L_0,L_1;H),d_{H,J},\ell_{L_0,L_1;H})$ is a Floer-type complex. The wrapped Floer cohomology for $(L_0,L_1;H)$ is defined to be the quotient space $\ker(d_{H,J})/{\rm im}(d_{H,J})$ which is denoted by $\HW^*_{\fc}(L_0,L_1;H)$. 

For $a\in\bR\cup\{\pm\infty\}$, we define the filtered wrapped Floer complex
$$\CW^*_{\fc, <a}(L_0,L_1;H)\coloneqq\{x\in \CW^*_{\fc}(L_0,L_1;H)\, | \,\ell_{L_0,L_1;H}(x)<a \}.$$
Since the filtration $\ell_{L_0,L_1;H}$ does not increase along the  gradient flows, the differential $d_{H,J}$ decreases the filtration $\ell_{L_0,L_1;H}$, and hence the vector space $\CW^*_{\fc,<a}(L_0,L_1;H)$ is a subcomplex of the wrapped Floer complex $\CW^*_{\fc}(L_0,L_1;H)$. For all $a\in\bR$, the filtered wrapped Floer cohomology of $\CW^*_{\fc,<a}(L_0,L_1;H)$ is denoted ${\rm HW}^*_{\fc,<a}(L_0,L_1;H)$.

For $k \in \bZ$ and $a \in \bR \cup \{\pm \infty\}$, we define 
$$\CW^k_{\fc,<a}(L_0,L_1;H)\coloneqq\bigoplus_{\substack{x\in{\rm Crit}_{\fc}(L_0,L_1;H)\\|x|=k,\ell_{L_0,L_1;H}(x)<a}}\bZ_2\left<x\right>.$$
As $d_{H,J}$ decreases the filtration, $d_{<a}:\CW^k_{\fc,<a}(L_0,L_1;H)\ra \CW^{k+1}_{\fc,<a}(L_0,L_1;H)$ of the differential is well-defined and thus $(\CW^*_{\fc,<a}(L_0,L_1;H),d_{<a})$ is a subcomplex of $(\CW^*_{\fc}(L_0,L_1;H),d_{H,J})$. For $a,b\in\bR\cup\{\pm\infty\}$, we can define the Floer complex in the filtration window $(a,b)$ as the quotient 
$$\CW^{*}_{\fc,(a,b)}(L_0,L_1;H)\coloneqq \CW^*_{\fc,<b}(L_0,L_1;H)/\CW^*_{\fc,<a}(L_0,L_1;H),$$
and we denote the projection of the differential by
$$d_{(a,b)}:\CW^k_{\fc,(a,b)}(L_0,L_1;H)\ra \CW^{k+1}_{\fc,(a,b)}(L_0,L_1;H).$$ 
Then for $a,b,c\in\bR\cup\{\pm\infty\}$, we have the short exact sequence as follows:
$$0 \ra\CW^*_{\fc,(a,b)}(L_0,L_1;H)\xrightarrow{\iota_{(a,b)}^{(a,c)}}\CW^*_{\fc,(a,c)}(L_0,L_1;H) \xrightarrow{\pi_{(a,c)}^{(b,c)}}\CW^*_{\fc,(b,c)}(L_0,L_1;H) \ra 0$$
where $\iota_{(a,b)}^{(a,c)}$ denotes the inclusion and $\pi_{(a,c)}^{(b,c)}$ denotes the projection.
We can define the boundary depth   
\begin{equation} \label{dfn-beta-two-Lag}
\beta_{\fc}(L_0,L_1;H)\coloneqq\sup_{x\in {\rm{Im}}d_{H,J}\backslash\{0\}}\inf\{\ell(y)-\ell(x)\mid d_{H,J}(y)=x\}.
\end{equation}

When $L_ 0= L_1$, the construction above recovers the setting in \cite{Gon24}. In this case, we will only consider the special case where the homotopy class $\fc=[\rm pt]$. To simplify the notation, we denote 
\begin{equation} \label{red-CW}
\CW^*_{(a,b)}(L; H)\coloneqq\CW^*_{[\rm pt],(a,b)}(L,L;H).
\end{equation}

We can define the Floer theory of compactly supported Hamiltonians on Liouville domains $W$ by extending to be in the form of linear functions on $\widehat{W}\backslash W$. More precisely, for each $H\in C^\infty_c([0,1]\times W)$, we take a regular $\widehat{H}\in\cH$ such that $\widehat{H}|_W$ is a $C^2$-small perturbation of $H$ and $\mu_H$ in (\ref{extension-H}) satisfies $0<\mu_H<\min\spec(\partial L,\partial L,\theta)$. The filtered Floer cohomology of $H$ is defined by 
\begin{equation} \label{FL-LD}
\HW^*_{(a,b)}(L; H)\coloneqq\HW^*_{(a,b)}(L; \widehat{H})
\end{equation}
where $\HW^*_{(a,b)}(L; \widehat{H})$ is defined via the cochain complex $\CW^*_{(a,b)}(L; \widehat{H})$ as above. As long as $\mu_H$ satisfies the condition $0<\mu_H<\min\spec(\partial L,\partial L,\theta)$ as above, the definition $\HW^*_{(a,b)}(L,H)$ in (\ref{FL-LD}) doesn't depend on the choice of $\widehat{H}$. 
Then the boundary depth of $H$ is $\beta(L; H)\coloneqq\beta_{[\rm pt]}(L,L;\widehat{H})$. Similarly, the spectral invariant for $0\ne\alpha\in H^*(L; \bZ_2)$ is defined by $$\ell(\alpha,H): =\inf\{a\in\bR\mid \pi_{(-\infty,\infty)}^{(a,\infty)}\circ PSS_H(\alpha)=0 \},$$
where $PSS_H$ is the PSS map defined in Section \ref{PSS}.

\subsection{Product structure of wrapped Floer cohomology}\label{section-product}
Now we recall the product structure of wrapped Floer cohomology.  Consider a $2$-dimensional disk $\bD$ with three boundary points $z_i\in\partial\bD$, $i=0,1,2$ removed. We denote the arc along $\partial\bD$ connecting $z_i$ to $z_{i+1}$ by $C_i$, with the convention that $z_3\coloneqq z_0$. Let $j$ be a complex structure on $\bD$.  Near every boundary punctures we equip a strip-like end which can be biholomorphically mapped onto the semi-infinite strips 
$$Z_\pm=\mathbb{R}_\pm\times [0,1]$$
with the coordinate $(s, t)$ and the standard complex structure, i.e. $j\partial_s=\partial_t$. More precisely, for $i=1,2$ we consider a positive strip-like end near $z_i$ which is a holomorphic embedding $\kappa_i:\mathbb{R}_+\times[0,1]\ra\bD$ satisfying 
$$\kappa_i^{-1}(\partial\bD)=\mathbb{R}_+\times\{0,1\} \text{ and }\lim_{s\ra+\infty}\kappa_i(s,\cdot)=z_i.$$
Near $z_0$  we equip a negative strip-like end in a similar way. Moreover, we require that these strip-like ends (the images of $\kappa_i$) are pairwise disjoint. We can described $\bD$ as a strip with a slit: consider the disjoint union
$\bR\times[0,0.5]\sqcup\bR\times[0.5,1] $
and identify $(s, 0.5^-)$ with $(s,0.5^+)$ for every $s\leq 0$ (see Figure \ref{fig-1}). 

\begin{figure}[ht]
	\centering
\includegraphics[scale=0.8]{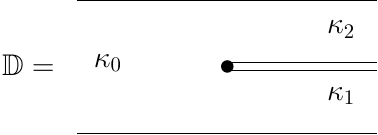}
\caption{A strip with a slit. }\label{fig-1}
\end{figure}

Giving regular pairs $(H_i,J_i)$, $i=0,1,2$, we want to define the product
$$\ast: \CW^*_{\fc_0}(L_0, L_1; H_1)\otimes \CW^*_{\fc_1}(L_1, L_2; H_2)\ra \CW^*_{\fc_0\ast \fc_1}(L_0, L_2; H_0).$$
where $\fc_0\in\pi_0(\cP(\widehat{L}_0,\widehat{L}_1))$, $\fc_1\in\pi_0(\cP(\widehat{L}_1, \widehat{L}_2))$ and $\fc_0\ast\fc_1\in\pi_0(\cP(\widehat{L}_0, \widehat{L}_2))$. Here we use $\fc_0\ast\fc_1$ denote the composition of the homotopy class. If $\fc_0=[\gamma_0]$ and $\fc_1=[\gamma_1]$, then $\fc_0\ast\fc_1=[\gamma_0\ast\gamma_1]$, where
\[
(\gamma_0\ast\gamma_1)(t)=\begin{cases}
\gamma_0(2t), & 0\leq t\leq\frac{1}{2}\\
\gamma_1(2t-1), & \frac{1}{2}\leq t\leq 1
\end{cases}.
\]
To this end, we need to choose compatible perturbation data. Denote by $K\in\Omega^1(\bD,C^{\infty}(\widehat{W},\bR))$ a Hamiltonian $1$-form, $X_K\in\Omega^1(\bD,\Gamma^\infty(T\widehat{W}))$ a Hamiltonian vector field associated with $K$. Let $J=\{J_z\}_{z\in\bD}$ be a family of almost complex structures parameterized by $\bD$ that satisfies $J_{\kappa_i(s,t)}=(J_i)_t$, $i=0,1,2$. Additionally, we require that $K|_{TC_i}\equiv 0$ and $\kappa_i^*K=(H_i)_t\otimes dt$, $i=0,1,2$. Next, consider the inhomogeneous $\overline{\partial}$-equation
\begin{equation}\label{inhom}
    \begin{cases}
    (du-X_K)^{0,1}=0\\
    u:\bD\ra\widehat{W},\quad     u(C_i)\subset\widehat{L}_i, \quad i=0,1,2,\\
    \displaystyle\lim_{s\ra\pm\infty}u(\kappa_i(s,\cdot))=z_i, \quad i=0,1,2,
\end{cases}
\end{equation}
where each $z_i$ is a chord of the Hamiltonian $H_i$ with respect to $\widehat{L}_i$ and $\widehat{L}_{i+1}$, with the convention that $\widehat{L}_3\coloneqq \widehat{L}_0$. Then we can define a chain map 
\begin{align*}
    \ast: \CW^*_{\fc_0}(L_0, L_1; H_1)\otimes \CW^*_{\fc_1}(L_1, L_2; H_2)\ra \CW^*_{\fc_0\ast\fc_1}(L_0, L_2; H_0)\\
    z_1\ast z_2=\sum_{\mu(z_0)=\mu(z_1)+\mu(z_2)} \sharp_{\bZ_2}\cM(K,J;z_0,z_1,z_2)z_0
\end{align*}
where the moduli space $\cM(K,J;z_0,z_1,z_2)$ is the space of the solutions $u$ to \eqref{inhom} with finite energy. This implies a well-defined bilinear map on homology
$$\ast:\HW^*_{\fc_0}(L_0, L_1; H_1)\otimes \HW^*_{\fc_1}(L_1, L_2; H_2)\ra \HW^*_{\fc_0\ast \fc_1}(L_0 ,L_2; H_0).$$
Meanwhile, the energy of such solution $u$ to \eqref{inhom} is defined by 
$$E(u)\coloneqq\dfrac{1}{2}\int_{\cD}\|du-X_K\|^2{\rm{vol}}_\cD.$$
For any Hamiltonian $1$-form $K\in\Omega^1(\bD,C^\infty(\widehat{W},\bR))$, we denote the curvature term of the Hamiltonian $1$-form $K$ as 
$$R(K)\coloneqq dK-\{K,K\}\in\Omega^2(\bD,C^\infty(\widehat{W},\bR)).$$ 
Then in local coordinates $(s,t)$ on $\Sigma\subset\bD$, we have 
$$R(K)(\partial_s,\partial_t)=\partial_s K(\partial_t)-\partial_t K(\partial_s)-\{K(\partial_s),K(\partial_t)\}$$
where $\{\cdot,\cdot\}$ denotes the Poisson bracket.  We have the following no escape lemma to control the solution to \eqref{inhom} (\cite{Rit13})
\begin{lemma}\label{lem:bound}
If $R(K)(\partial_s,\partial_t)\leq 0$, then $u(\mathbb{D})$ lies in $W$ for all solutions $u$ to \eqref{inhom}.
\end{lemma}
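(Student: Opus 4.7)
The plan is to prove Lemma~\ref{lem:bound} by an integrated maximum principle, in the spirit of the no-escape results for wrapped Floer cohomology in \cite{Rit13} and \cite{Gon23}. Arguing by contradiction, suppose some solution $u$ to \eqref{inhom} satisfies $\sup_{z\in\bD} r(u(z)) > 1$, where $r$ denotes the radial coordinate on the cylindrical end $[1,\infty)\times\partial W$. At each puncture the map $u$ converges to a Hamiltonian chord $z_i$ sitting at a fixed radius determined by the slope $\mu_i$, so I may choose a regular value $r_1$ of $r\circ u$ satisfying $\max_i r(z_i) < r_1 < \sup_{z}r(u(z))$. Set $\Sigma := u^{-1}([r_1,\infty)\times\partial W)$; this is a compact subsurface of $\bD$ with piecewise-smooth boundary disjoint from the strip-like ends, and $\partial\Sigma$ decomposes into two kinds of arcs: (a) interior arcs of $\bD$ on which $r\circ u \equiv r_1$, and (b) arcs in $\partial\bD$ mapped into $\widehat L\cap([r_1,\infty)\times\partial W)=[r_1,\infty)\times\partial L$.

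Next, I would apply the standard energy identity for solutions to \eqref{inhom} with Hamiltonian-valued perturbation 1-form:
\begin{equation*}
E(u|_\Sigma) \;=\; \int_{\Sigma}\bigl(u^*\omega-d(u^*K)\bigr)\;+\;\int_\Sigma R(K)(\partial_s,\partial_t)|_u \, {\rm vol}_\Sigma ,
\end{equation*}
where $u^*K$ denotes the 1-form on $\Sigma$ sending $z\mapsto K(z)(u(z))$. The curvature hypothesis $R(K)(\partial_s,\partial_t)\le 0$ makes the second summand nonpositive, while Stokes' theorem together with $\omega=d\theta$ on the cylindrical end rewrites the first summand as $\int_{\partial\Sigma}(u^*\theta-u^*K)$. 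On type-(b) arcs the Liouville vector field is tangent to $\widehat L$ outside $W$, hence $\theta|_{T\widehat L}\equiv 0$ there; combined with the compatibility condition $K|_{T\partial\bD}\equiv 0$, these arcs contribute zero. On type-(a) arcs, where $r\circ u\equiv r_1$ and $\theta=r_1\alpha$, the contact-type condition $dr\circ J=-\theta$ coupled with the perturbed Floer equation $(du-X_K)^{0,1}=0$ yields that $u^*\theta-u^*K$ restricted to the tangent direction of $\partial\Sigma$ is a nonpositive multiple of $\partial_\nu(r\circ u)\ge 0$, where $\nu$ denotes the outward normal pointing into $\{r>r_1\}$. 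Combining these estimates gives $E(u|_\Sigma)\le 0$, but nonnegativity of the energy then forces $E(u|_\Sigma)=0$.

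Consequently $du=X_K$ on $\Sigma$. Since admissibility of the $H_i$ forces $K$ to be $r$-linear on the cylindrical end (after a suitable interpolating choice of perturbation datum), $X_K$ is pointwise tangent to the level sets $\{r=\text{const}\}\times\partial W$, so $r\circ u$ is locally constant on $\Sigma$, contradicting the assumption $\sup r(u)>r_1$. The main technical obstacle in implementing this plan is the type-(a) boundary computation: one must carefully verify that the $X_K$-correction terms appearing in the expansion of $u^*\theta$ cancel exactly against those in $u^*K$, leaving only the sign-definite $\partial_\nu r$-term, and one must control the contribution at the corners where type-(a) and type-(b) arcs meet. I would carry this out in local coordinates $(s,t)\in Z_\pm$ on strip-like ends, following the sign conventions in Section~19 of \cite{Rit13} and adapting the argument to the Legendrian boundary setting as in \cite{Gon23}.
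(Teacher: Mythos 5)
The paper itself gives no proof of this lemma: it simply invokes the no-escape lemma of \cite{Rit13}, and your integrated-maximum-principle argument is exactly the proof behind that citation. Your skeleton is the right one: pick a regular level $r_1$ above the radii of the asymptotic chords, set $\Sigma=u^{-1}([r_1,\infty)\times\partial W)$, use the energy identity $\tfrac12\|du-X_K\|^2\,{\rm vol}=u^*\omega-d(K\circ u)+R(K)\circ u$ together with $R(K)\le 0$, kill the arcs in $\partial\bD$ using $\theta|_{T\widehat{L}}=0$ on the conical part of $\widehat L$ and $K|_{T\partial\bD}=0$, and extract a sign on the level-set arcs from the contact-type condition $dr\circ J=-\theta$.

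The step that is not yet a proof is precisely the type-(a) boundary estimate, and it is not a matter of the $X_K$-terms cancelling \emph{exactly}. Writing $du(v)=X_{K(v)}+(du(v)-X_{K(v)})$ along a level arc, the holomorphic part produces the sign-definite normal-derivative term (note your orientation is off: the outward normal of $\Sigma$ points toward $\{r\circ u<r_1\}$, so the relevant derivative is $\le 0$ there), but there remains the zeroth-order term $\theta(X_{K(v)})-K(v)=\bigl(rh'(r)-h(r)\bigr)\big|_{r=r_1}$ for each Hamiltonian component $h(r)$ of $K$ on the cone. This vanishes only when $h(r)=\mu r$ with no constant; for the paper's admissible Hamiltonians $\mu_H r+a$ the leftover is $-a$ times the corresponding surface $1$-form, and it must be handled separately --- either by normalizing the extensions so the $y$-intercept is nonnegative, or by integrating it out: since $K|_{T\partial\bD}=0$, Stokes converts its integral over the level arcs into an integral over $\Sigma$ of the exterior-derivative part of $K$, which is exactly what the curvature hypothesis controls on the cone, where functions of $r$ Poisson-commute so that $R(K)(\partial_s,\partial_t)$ reduces to $\partial_sK(\partial_t)-\partial_tK(\partial_s)$. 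You also implicitly assume $dr(X_{K(n)})=0$, which again uses that $K$ depends on the target only through $r$ on the cone --- state this. Finally, in the last step you should note that every component of $\Sigma$ meets $\{r\circ u=r_1\}$ (true because the asymptotic chords lie at radius $<r_1$ and $\bD$ is connected), so $E(u|_\Sigma)=0$ together with $X_K$ tangent to the level sets indeed forces $r\circ u\equiv r_1$ on the offending component, giving the contradiction. With these points made explicit your argument is the standard one the paper cites.
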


To obtain the desired $K$ (in order to satisfy the condition in Lemma \ref{lem:bound}), the construction of $K$ is based on the following lemma.
\begin{lemma}
Let $\{H^s\}_{s\in[0,1]}$ be a path of admissible Hamiltonians with $\phi^1_{H^s}=\phi^1_{H}$ for all $s\in[0,1]$. Then there exists $K\in\Omega^1([0,1]^2,C^{\infty}(\widehat{W},\bR))$ such that $K|_{T([0,1]\times\{i\})}\equiv 0$, $K|_{\{i\}\times[0,1]}=H_i\otimes dt$, $i=0,1$ and $R(K)=dK-\{K,K\}\equiv 0$ on $[0,1]^2$.
\end{lemma}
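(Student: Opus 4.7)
I would write $K = F\,ds + G\,dt$ in the standard coordinates $(s,t)$ on $[0,1]^2$, where $F,G\colon [0,1]^2 \to C^\infty(\widehat{W},\bR)$, and then construct $F$ and $G$ by hand. The vertical boundary condition $K|_{\{i\}\times[0,1]} = H_i\otimes dt$ forces $G(i,t,\cdot) = H^i(t,\cdot)$, so the natural choice is to set $G(s,t,x) \coloneqq H^s(t,x)$. The horizontal condition $K|_{T([0,1]\times\{i\})} \equiv 0$ reduces to $F(s,i,\cdot)\equiv 0$ for $i = 0, 1$, and in these coordinates the flatness condition $R(K)\equiv 0$ unpacks to the transport-type equation
\begin{equation*}
\partial_s G - \partial_t F - \{F, G\} = 0
\end{equation*}
in the variable $t$, with $s$ as parameter.

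The core step will be to construct $F$ directly from the isotopies. Set $\psi^s_t \coloneqq \phi^t_{H^s}$ and define the time-dependent vector field
\begin{equation*}
Y^s_t \coloneqq (\partial_s \psi^s_t)\circ(\psi^s_t)^{-1}
\end{equation*}
on $\widehat{W}$. The hypothesis $\phi^1_{H^s} = \phi^1_H$ for every $s$, together with the standing convention that admissible slopes avoid the Reeb spectrum, forces the slopes of the $H^s$ to be independent of $s$; consequently the flows $\psi^s_t$ all agree with a fixed Reeb flow on the cone end of $\widehat{W}$, so $Y^s_t$ is compactly supported in $W$ and hence automatically Hamiltonian. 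I would then take $F(s,t,\cdot)$ to be its unique compactly supported Hamiltonian generator.

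With this choice, the horizontal boundary conditions become immediate: $\psi^s_0 \equiv {\rm id}$ gives $Y^s_0 = 0$ and hence $F(s,0,\cdot) \equiv 0$, while the hypothesis that $\psi^s_1 = \phi^1_H$ is independent of $s$ gives $\partial_s \psi^s_1 = 0$ and hence $F(s,1,\cdot) \equiv 0$. For the flatness condition, I would differentiate the identities $\partial_t \psi^s_t = X_{H^s}(t,\psi^s_t)$ in $s$ and $\partial_s \psi^s_t = X_F(s,t,\psi^s_t)$ in $t$; the equality of the mixed partials, combined with the standard relation between the Lie bracket of Hamiltonian vector fields and the Poisson bracket of their generators, reduces after fixing the compactly supported normalization of $F$ to precisely $\partial_s H^s - \partial_t F - \{F, H^s\} = 0$, i.e.~$R(K)\equiv 0$.

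The main obstacle I anticipate is the bookkeeping at the cone end of $\widehat{W}$: one must verify that the slopes of the $H^s$ are genuinely forced to be constant in $s$ (so that $Y^s_t$ is compactly supported and admits a \emph{unique} compactly supported Hamiltonian generator), and check that this is compatible with the admissibility conventions used elsewhere in the paper. These are standard Liouville-domain considerations, but they govern the regime in which the construction is well-posed and deserve explicit attention in the proof; once they are in place, the rest of the argument is essentially formal calculus of Hamiltonian connections.
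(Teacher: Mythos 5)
Your construction is essentially the paper's own proof: the paper likewise writes $K=H\,dt+G\,ds$ where $G(s,t,\cdot)$ generates the vector field $\partial_s\phi\circ\phi^{-1}$ of the two-parameter family $\phi(s,t)=\phi^t_{H^s}$, derives the zero-curvature identity from Banyaga's lemma (your equality of mixed partials plus the bracket/Poisson-bracket relation), and kills the resulting $(s,t)$-dependent constant by noting that all terms vanish on $\widehat{W}\setminus W$ — the same role your compactly supported normalization of $F$ plays. The boundary conditions are obtained in the same way (identity at $t=0$, $s$-independence of $\phi^1_{H^s}$ at $t=1$), so your proposal is correct and follows the paper's route.
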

\begin{proof}
Let $\phi(s,t)=\phi_{H^s}^t$ be a two-parameter family of Hamiltonian diffeomorphisms, where $H^s$ is the Hamiltonian generates the vector field $\pdv{\phi}{t}\circ\phi^{-1}.$ Denote $G$ as the family of functions $G=G(s,t,x)$ where $G^s=G(s,\cdot,\cdot):[0,1]\times\widehat{W}\ra\bR$ is the Hamiltonian generates the vector field $\pdv{\phi}{s}\circ\phi^{-1}.$ Banyaga's Lemma (\cite{Ban97}, Proposition 3.1.5) implies that $$\pdv{X_H}{s}-\pdv{X_G}{t}+[X_G,X_H]=0.$$
Taking the interior product with $\omega$ leads to the conclusion that
$$\pdv{H}{s}-\pdv{G}{t}-\{G,H\}=c(s,t)$$
for some  $c$ only depending on $(s,t)$. Since all terms vanish on $\widehat{W}\backslash W$, it follows that  $c(s,t)\equiv 0$.  The the Hamiltonian $1$-form $K\coloneqq Hdt+Gds$ satisfies
$$R(K)(\partial_s,\partial_t)=\pdv{H}{s}-\pdv{G}{t}-\{G,H\}=0.$$
Since $\phi(s,0)= \mathds{1}$ and $\phi(s,1)=\phi^1_{H^0}$, it follows that $G|_{[0,1]\times\{0,1\}}\equiv 0$ and $K|_{T([0,1]\times\{i\})}\equiv 0$ for $i=0,1$. By reparameterizing $H^s$ such that $\partial_sH^s=0$ near $s=0,1$, we have  $G|_{([0,\varepsilon]\cup[1-\varepsilon])\times[0,1]}\equiv 0$ for a small $\varepsilon>0$ and $K|_{\{i\}\times[0,1]}=H_i\otimes dt$ for $i=0,1$.
\end{proof}
Let $K_0$ be a Hamiltonian $1$-form connecting $(H_1\sharp H_2)\otimes dt$ and $(H_1'\ast H_2') \otimes dt$, where $H_1'\ast H_2'$ is defined as follows,
$$(H_1'\ast H_2')_t(x)=\begin{cases}
(H_1')_{2t}(x) &\mbox{if} \,\,\,\,  0\leq t\leq\dfrac{1}{2}, x\in W\\
(H_2')_{2t-1}(x)\,\,\,\,&\mbox{if} \,\,\,\,\dfrac{1}{2}\leq t\leq 1, x\in W\\
2(H_1)_{2t}(x) &\text{if} \,\,\,\, x\in\widehat{W}\backslash W
\end{cases}$$
and $H'=\chi'(t) H$ for  a suitable smooth monotone surjection $\chi:[0,1/2]\ra[0,1]$. Let $K_i$ be the Hamiltonian $1$-form connecting $H'_i \otimes dt$ and $2H_i \otimes dt$ for $i=1,2$. Consider a Hamiltonian $1$-form $K$ is defined as follows, 
$$K=
\begin{cases}
(H_1\sharp H_2)\otimes dt & \text{if} (s,t)\in(-\infty,-1)\times(0,1)\\
K_0(s+1,t) &\text{if} (s,t)\in [-1,0]\times(0,1)\\
K_1(s,2t) &\text{if} (s,t)\in(0,1]\times(0,0.5)\\
K_2(s,2t-1)&\text{if} (s,t)\in(0,1]\times (0.5,1)\\
2H_1\otimes dt &\text{if} (s,t)\in(1,\infty)\times(0,0.5)\\
2H_2\otimes dt&\text{if} (s,t)\in(1,\infty)\times(0.5,1)
\end{cases}
$$
then $R(K)\equiv 0$,  $K|_{T\partial\bD}\equiv 0$, $\kappa_i^*K=(H_i)_t\otimes dt$ for $i=0,1,2$. Note that $\kappa_i^*(2dt)=dt$.

\begin{figure}[ht]
	\centering
\includegraphics[scale=1]{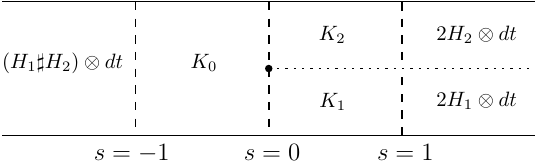}
\caption{The desired Hamiltonian $1$-form $K$.}\label{fig-Ham-form}
\end{figure}

Applying the local result $\dfrac{1}{2}\|du-X_K\|^2=u^*\omega-d(K\circ u)+R\circ u$ (see \cite{MS12}, Section 8.1), we get
\begin{align*}
    0\leq E(u)=&\int_\bD\dfrac{1}{2}\|du-X_K\|^2\rm{vol}_\bD\\
    =&\int_\bD d(u^*\theta-K\circ u)\\
    =&\int_{[0,1]} -z_0^*\theta+ z_1^*\theta+z_2^*\theta-\int_{[0,1]}(H_1+H_2-H_1\sharp H_2)dt\\
    =&\cA_{L_0, L_2; H_1\sharp H_2}(z_0)-\cA_{L_0, L_1; H_1}(z_1)-\cA_{L_1, L_2; H_2}(z_2)\\
    =&-\ell_{L_0, L_2; H_1\sharp H_2}(z_0)+\ell_{L_0, L_1; H_1}(z_1)+\ell_{L_1, L_2; H_2}(z_2).
\end{align*}
In this way, the product $\ast$ realizes a product between filtered chain complexes, 
$$\ast:\CW^*_{\fc_0,<a}(L_0, L_1; H_1)\otimes \CW^*_{\fc_1,<b}(L_1, L_2; H_2)\ra \CW^*_{\fc_0\ast\fc_1, <a+b}(L_0, L_2; H_1\sharp H_2)$$
for any $a, b\in\bR$. In particular, we have the filtered map for any $x\in \CW^k_{\fc_0}(L_0, L_1; H_1)$ with $\ell_{L_0, L_1; H_1}(x)=a$,
\begin{equation} \label{x-product}
x\ast-:\CW^*_{\fc_1}(L_1, L_2; H_2)\ra \CW^{*+k}_{\fc_0\ast\fc_1}(L_0, L_2; H_1\sharp H_2)[a],
\end{equation}
where the right-hand side $(\CW^{*+k}_{\fc_0\ast\fc_1}(L_0, L_2; H_1\sharp H_2)[a])_{<t}\coloneqq\CW^{*+k}_{\fc_0\ast\fc_1,<t+a}(L_0, L_2; H_1\sharp H_2)$ for any $t\in\bR$.

\medskip

Similarly, we can define a higher order product $\mu_3$, with the Floer perturbation data based on the moduli space of discs with $4$ punctured points on boundary. Denote this moduli space by $\cM_4$ and there exists
\begin{align*}
\mu_3: \CW^*_{\fc_0, <a}(L_0, L_1; H_1)\otimes \CW^*_{\fc_1,<b}(L_1, L_2; H_2)\otimes &\CW^*_{\fc_2,<c}(L_2, L_3; H_3)\ra \\
&\CW^*_{\fc_0\ast\fc_1\ast\fc_2,<a+b+c+\varepsilon}(L_0, L_3; H_1\sharp H_2\sharp H_3)
\end{align*}
for any small $\varepsilon>0$ (see \cite{KS21}, Section 2.5), $\fc_0\in\pi_0(\cP(\widehat{L}_0,\widehat{L}_1))$, $\fc_1\in\pi_0(\cP(\widehat{L}_1, \widehat{L}_2))$,  $\fc_2\in\pi_0(\cP(\widehat{L}_2, \widehat{L}_3))$, and $\fc_0\ast\fc_1\ast\fc_2\in\pi_0(\cP(\widehat{L}_0, \widehat{L}_3))$. 
From the consideration of the compactness of $\cM_4$ (see Figure \ref{fig-comp-mu3}), we have a relation that mixes $d, \ast$ and $\mu_3$, 
\begin{equation} \label{mu3-rel}
d(\mu_3(x,y,z))+\mu_3(dx,y,z)+\mu_3(x,dy,z)+\mu_3(x,y,dz)=(x\ast y)\ast z+x\ast(y\ast z)
\end{equation}
where $x\in \CW^*_{\fc_0,<a}(L_0, L_1; H_1), y\in \CW^*_{\fc_1,<b}(L_1, L_2; H_2), z\in \CW^*_{\fc_2,<c}(L_2, L_3; H_3)$ and $(x\ast y)\ast z,x\ast(y\ast z)\in \CW^*_{\fc_0\ast\fc_1\ast\fc_2,<a+b+c+\varepsilon}(L_0, L_3; H_1\sharp H_2\sharp H_3)$ composed with suitable inclusion.
\begin{figure}[ht]
    \centering
    \includegraphics[scale=1.4]{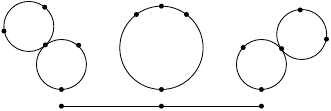} 
    \caption{Compactification of $\cM_4$.}\label{fig-comp-mu3}
\end{figure}

\subsection{PSS morphism}\label{PSS}
We introduce the PSS morphism in absolute and relative cases. For more details, see \cite{Lec08,Rit13}.
Fix a Riemannian metric $g$ on $\widehat{W}$. 
In order to control the energy (while working on non-compact $\widehat{W}$), we will concentrate on the functions that adapted to $W$, which means that the following conditions hold:
\begin{enumerate}
    \item No critical points of $f$ occur in $\widehat{W}\backslash W\cong\partial W\times(1,\infty)$;
    \item The gradient vector field $\nabla f$ of $f$ with respect to $g$ points outward along $\partial W$;
    \item  $f|_L$ is a $C^2$-smal Morse function and $(f,g)$ is a Morse-Smale pair.
\end{enumerate}
We denote $\text{Crit}_k(f)$ the set of critical points of $f$ witht Morse index $k$. Then we  define the Morse complex ${\rm CM}^k(f,g)=\bZ_2\left<\text{Crit}_k(f)\right>$ and a differential $d$ by counting isolated negative gradient flow lines of $\nabla f$, i.e.
$$d: {\rm CM}^k(f,g)\ra {\rm CM}^{k+1}(f,g),$$
$$d q=\sum_{{\rm{Ind}}_f(p)=k+1}\sharp_{\bZ_2}\cM_{p,q}(f,g)\cdot p,$$
where the moduli space $\cM_{p,q}(f,g)$ is given by
$$\cM_{p,q}(f,g)\coloneqq\{\gamma\in C^\infty(\bR,L)\, | \,\dot{\gamma}=-\nabla f(\gamma(t)), \gamma(-\infty)=p, \gamma(\infty)=q\}/\bR.$$
The cohomology of the complex $({\rm CM}^k(f,g), d)$ is called the Morse cohomology for the pair $(f,g)$ which we denote by ${\rm HM}^*(f,g)$. It can be shown that ${\rm HM}^*(f,g)$ is isomorphic to the singular cohomology of $H^*(W)$ over $\bZ_2$, and it does not depend on the Morse-Smale pair $(f,g)$.

Let $f$ be a function adapted to $W$ and $H\in\cH$. We recall the construction of the Piunikhin-Salamon-Schwarz (PSS) homomorphism from ${\rm HM}^*(f,g)$ to $\HF^*(H)$ by counting isolated spiked Floer strips. Let $p\in W$ be a critical point of $f$ and let $x\in\cP(H)$. Consider $(J^s)_{s\in\bR}\subset \cJ$ be a family of regular complex structures.  Further, let $\chi:\bR\ra[0,1]$ be a smooth function that satisfies $\chi(s) = 1$ if $s\leq 0$, $\chi(s) = 0$ if $s \geq 1$, and $\chi'(s) \leq 0$ for all $s$. Then we consider the moduli space $\cM(p,x;H,J,\chi,f,g)$ of the pairs of maps 
$$z:[0,\infty)\ra W,\quad u:\bR\times [0,1]\ra \widehat{W},$$
which satisfy
$$\begin{cases}
\frac{dz}{dt}=-\nabla f(z(t)),\\
\partial_s u+J^s_{t}\big(\partial_t u-\chi(s)X_{H}(u)\big)=0\;\hbox{with}\;E(u)<\infty,\\
z(+\infty)=p,\; u(-\infty,t)=x(t),\\
z(0)= u(+\infty).
\end{cases}$$
Up to generic choices of $(f,g,J)$, the space $\cM(p,x;H,J,\chi,f,g)$ is a smooth manifold with dimension $\mu(x)-\ind_f(p)$.

We define the morphism $\psi^H_f:{\rm CM}^k(f,g)\to \CF^k(H)$ on generators by
$$
\psi^H_f(p)=\sum_{\mu(x)=\ind_f(p)}\sharp_{\bZ_2}\cM(p,x;J,H,\chi,f,g)\cdot x
$$
and extend this map by linearity over $\bZ_2$. This extended map is a chain map and induces the PSS-type homomorphism
$$\psi_{f}^H:{\rm HM}^k(f,g)\longrightarrow \HF^k(H).$$
Since $H^*(W)\cong {\rm HM}^*(f,g)$, we can simplify the notation and denote the corresponding homomorphism from $H^*(W)$ to $\HF^*(H)$ as $PSS_H$. 

\medskip

For the relative case, we work on the functions that adapted to $L$ similarly, which means that the following conditions hold:
\begin{enumerate}
    \item No critical points of $f$ occur in $\widehat{L}\backslash L\cong\partial L\times(1,\infty)$;
    \item The gradient vector field $\nabla f$ of $f$ with respect to $g$ points outward along $\partial L$;
    \item  $f|_L$ is a $C^2$-smal Morse function and $(f,g)$ is a Morse-Smale pair.
\end{enumerate}
We can also define the the Morse complex ${\rm CM}^k(L,f,g)=\bZ_2\left<\text{Crit}_k(f)\right>$ and the differential $d$.
The Morse cohomology ${\rm HM}^*(L,f,g)$ is isomorphic to the singular cohomology of $H^*(L)$ over $\bZ_2$, and it does not depend on the Morse-Smale pair $(f,g)$.

To define the Lagrangian PSS map, we consider the moduli space $\cM(p,x;H,J,\chi,f,g)$ of the pairs of maps 
$$z:[0,\infty)\longrightarrow L,\quad u:\bR\times [0,1]\longrightarrow \widehat{W}$$
which satisfy
$$\begin{cases}
\frac{dz}{dt}=-\nabla f(z(t)),\\
\partial_s u+J^s_{t}\big(\partial_t u-\chi(s)X_{H}(u)\big)=0\;\hbox{with}\;E(u)<\infty,\\
z(+\infty)=p,\; u(-\infty,t)=x(t),\\
u(s,0), u(s,1)\in L,\\
z(0)= u(+\infty).
\end{cases}$$
Up to generic choices of $(f,g,J)$, the space $\cM(p,x;H,J,\chi,f,g)$ is a smooth manifold with dimension $\mu(x)-\ind_f(p)$.
Then the morphism $\psi^H_f:{\rm CM}^k(L,f,g)\to \CW^k(L,H)$ given by
$$
\psi^H_f(p)=\sum_{\mu(x)=\ind_f(p)}\sharp_{\bZ_2}\cM(p,x;J,H,\chi,f,g)\cdot x
$$
induces a PSS-type homomorphism
$$\psi_{f}^H:{\rm HM}^k(L,f,g)\longrightarrow \HW^k(L,H).$$Since $H^*(L)\cong HM^*(L,f,g)$, we can simplify the notation and denote the corresponding homomorphism from $H^*(L)$ to $\HW^*(L,H)$ as $PSS_H$. 

\section{Proofs}\label{sec-3}
\subsection{Proof of Theorem~\ref{betaleqgamma}}
The proof of the (absolute) Hamiltonian case closely follows Section 4 in \cite{KS21}, and the proof of the (relative) Lagrangian case is slightly different, so we only prove the Lagrangian case.

Let $H$ be an admissible Hamiltonian with small slope and $C^2$-small closed to a compactly supported Hamiltonian when restricted to $W$. Fix a homotopy class $\fc\in\pi_0(\cP(\widehat{L}_0, \widehat{L}_1))$. For homogeneous cocycles $x\in \CW^k_{\fc}(L_0, H)$, $y\in \CW^l_\fc(L_0, \overline{H})$, with $a=\cA(x), b=\cA(y)$, we can define the following filtered maps due to (\ref{x-product}), 
\begin{equation}\label{eq-continuation}
\begin{split}
x \ast-:&\CW^*_\fc(L_0, L_1; F)\ra \CW^{*+k}_\fc(L_0, L_1; H\sharp F)[a]\\
y\ast-:&\CW^*_\fc(L_0, L_1; G)\ra \CW^{*+l}_\fc(L_0, L_1; \overline{H}\sharp G)[b].
\end{split}
\end{equation}
Moreover, the following useful lemma holds by the relation (\ref{mu3-rel}) in Section \ref{section-product}. 
\begin{lemma}[cf. Lemma~40 in \cite{KS21}]\label{lemma-htp}
     For any $\varepsilon>0$, we can choose the Floer perturbation data such that the following chain maps
    $$y\ast(x\ast-):\CW^*_\fc(L_0, L_1; F)\ra \CW^{*+k+l}_\fc(L_0, L_1; F)[a+b+\varepsilon]$$
    $$((y\ast x)\ast-):\CW^*_\fc(L_0, L_1; F)\ra \CW^{*+k+l}_\fc(L_0, L_1; F)[a+b+\varepsilon]$$
    are well-defined, filtered, and moreover, filtered chain homotopic.
\end{lemma}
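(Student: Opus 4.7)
The plan is to realize the desired chain homotopy as $K(z) \coloneqq \mu_3(y,x,z)$, and then exploit the $A_\infty$-type relation (\ref{mu3-rel}) together with the filtered estimate for $\mu_3$ recalled at the end of Section \ref{section-product}.

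First, I would unwind the filtration degrees. The triple product constructed in Section \ref{section-product} gives, for any prescribed $\varepsilon>0$ and after a suitable choice of Floer perturbation data on the moduli space $\cM_4$ of discs with four boundary punctures, a filtered map
\begin{equation*}
\mu_3 \co \CW^*_{<b}(L,H\sharp\overline{G})\otimes \CW^*_{<a}(L,G\sharp\overline{F})\otimes \CW^*_{<t}(L,F)\longrightarrow \CW^{*-1}_{<a+b+t+\varepsilon}(L,H\sharp\overline{F}\sharp F).
\end{equation*}
After composing with the natural filtered identification $H\sharp\overline{F}\sharp F \simeq H$ (or, more carefully, with the continuation-type map which shifts filtrations by at most an arbitrarily small amount absorbed into $\varepsilon$), specialising the first two slots to the fixed cocycles $y$ and $x$ with $\ell(x)=a$ and $\ell(y)=b$ yields a chain-level operator
\begin{equation*}
K \co \CW^*(L,F)\longrightarrow \CW^{*+k+l-1}(L,H)[a+b+\varepsilon], \qquad K(z)=\mu_3(y,x,z),
\end{equation*}
which is filtered by construction.

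Next, I would substitute $(x,y,z)\mapsto(y,x,z)$ into (\ref{mu3-rel}), obtaining
\begin{equation*}
d\bigl(\mu_3(y,x,z)\bigr)+\mu_3(dy,x,z)+\mu_3(y,dx,z)+\mu_3(y,x,dz)=(y\ast x)\ast z+y\ast(x\ast z).
\end{equation*}
Since $x$ and $y$ are cocycles, $dx=0$ and $dy=0$, so the middle two terms vanish and we are left with
\begin{equation*}
dK(z)+K(dz)=(y\ast x)\ast z+y\ast(x\ast z),
\end{equation*}
which is precisely the chain homotopy identity between $y\ast(x\ast-)$ and $(y\ast x)\ast-$ as maps $\CW^*(L,F)\to \CW^{*+k+l}(L,H)[a+b+\varepsilon]$. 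The fact that both sides land in the same filtered complex $\CW^{*+k+l}(L,H)[a+b+\varepsilon]$ follows directly from the filtered estimates for $\ast$ recalled in (\ref{x-product}).

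The only delicate point, and the expected main obstacle, is arranging consistent perturbation data so that (i) the associativity homotopy $\mu_3$ exists with the stated filtered bound on the nose (rather than only up to some uncontrolled constant), and (ii) the two compositions $y\ast(x\ast-)$ and $(y\ast x)\ast-$ are themselves defined using Floer data that match the ends of the degenerations pictured in Figure \ref{fig-comp-mu3}. Both issues are handled by the same device used in \cite{KS21} and Section \ref{section-product}: one chooses the Hamiltonian $1$-forms on the four-punctured discs so that the curvature term $R(K)$ is non-positive (so Lemma \ref{lem:bound} controls the image), and so that the two boundary strata of $\overline{\cM}_4$ reproduce exactly the iterated products $y\ast(x\ast-)$ and $(y\ast x)\ast-$; reparametrizations and gluing produce an $\varepsilon$ loss in the filtration that we absorb into the statement. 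Granted these standard but careful choices, the $A_\infty$-relation combined with $dx=dy=0$ finishes the proof.
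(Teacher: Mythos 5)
Your proposal is correct and follows essentially the same route as the paper: the homotopy is $\mu_3(y,x,-)$, the relation (\ref{mu3-rel}) together with the cocycle conditions $dx=dy=0$ (which the paper uses implicitly, since $x$ and $y$ are the PSS representatives from the surrounding argument) yields the chain homotopy identity, and the filtered bound $\ell_{L,H}(\mu_3(y,x,z))\leq \ell_{L,F}(z)+a+b+\varepsilon$ gives the stated shift. The extra care you take about the identification $H\sharp\overline{F}\sharp F\simeq H$ and the choice of perturbation data matching the boundary strata of $\overline{\cM}_4$ is consistent with how the paper sets this up in Section \ref{section-product}.
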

\begin{proof}
    For any $z\in \CW^*_\fc(L_0, L_1; F)$, we have $d(\mu_3(y,x,z))+\mu_3(dy,x,z)+\mu_3(y,dx,z)+\mu_3(y,x,dz)=(y\ast x)\ast z+y\ast(x\ast z).$ As we work on $\bZ_2$, we have 
    \begin{align*}
       (y\ast(x\ast-))-((y\ast x)\ast-) =&d\mu_3(y,x,-)+\mu_3(dy,x,-)+\mu_3(y,dx,-)+\mu_3(y,x,d-)\\
       =&d\circ \mu_3(y,x,-)+\mu_3(y,x,-)\circ d 
    \end{align*}    
    As $\ell_{L_0, L_1; F}(\mu(y,x,z))\leq\ell_{L_0, L_1;F}(z)+a+b+\varepsilon$ for all $z$ and both $(y\ast(x\ast-))$ and $((y\ast x)\ast-)$ preserve filtrations, $(y\ast(x\ast-))$ and $((y\ast x)\ast-)$ are filtered chain homotopic.
\end{proof}

Back to the proof of Theorem \ref{betaleqgamma}, let $H$ be an admissible Hamiltonian with small slope and $C^2$-small close to a compactly supported Hamiltonian when restricted to $W$. Fix a homotopy class $\fc\in\pi_0(\cP(\widehat{L}_0, \widehat{L}_1))$. We  choose  homogeneous cocycles $x\in \CW^0(L_0, H)$, $y\in \CW^0(L_0, \overline{H})$ such that
\[
[x]=PSS_{H}(\mathds{1}_{L_0}),\, [y]=PSS_{\overline{H}}(\mathds{1}_{L_0}),
\]
and 
\[
\cA(x)=\ell(\mathds{1}_{L_0}, H),\, \cA(y)=\ell(\mathds{1}_{L_0}, \overline{H}).
\]
Let $a=\ell(\mathds{1}_{L_0}, H)$ and $b=\ell(\mathds{1}_{L_0}, \overline{H})$. The maps in \eqref{eq-continuation} induce morphisms at the filtered cohomology level, that is, for all $t\in\bR$, we have
\begin{align*}
    [x\ast-]: & \HW^*_{\fc,<t}(L_0, L_1; 0)\ra \HW^*_{\fc,<t+a}(L_0, L_1; H),\\
    [y\ast-]: & \HW^*_{\fc,<t}(L_0, L_1; H)\ra \HW^*_{\fc,<t+b}(L_0, L_1; 0).
\end{align*}
Here we take $F=0$ and $G=H$.
By Lemma \ref{lemma-htp}, for any small $\varepsilon>0$,
\[   
    [(x\ast y)\ast-]=[x\ast-]\circ[y\ast-]:\HW^*_{\fc,<t}(L_0, L_1; H)\ra \HW^*_{\fc,<t+a+b+\varepsilon}(L_0, L_1; H).
\]
We note that  $[x\ast y]=PSS_0(\mathds{1}_{L_0})$; therefore,
\[
   [x\ast-]\circ[y\ast-]=\iota_{a+b+\varepsilon}:\HW^*_{\fc, <t}(L_0, L_1; H)\ra \HW^*_{\fc,<t+a+b+\varepsilon}(L_0, L_1; H)
\]
where $\iota_{a+b+\varepsilon}$ is induced by $\iota_{a+b+\varepsilon}:\CW^*_{\fc, <t}(L_0, L_1; H)\ra \CW^*_{\fc, <t+a+b+\varepsilon}(L_0, L_1; H)$. Since $\iota_{a+b+\varepsilon}$ factors through the zero complex $\CW^*_{\fc}(L_0,L_1;0)$, we have $\iota_{a+b+\varepsilon}=0$ for every $t\in\bR$. This implies that, for any $\varepsilon>0$,
\[
\beta_{\fc}(L_0,L_1;H)\leq a+b+\varepsilon.
\]
Hence, $\beta_{\fc}(L_0,L_1;H)\leq a+b$.
Noting that $\gamma(L_0;H)=\ell(\mathds{1}_{L_0}, H)+\ell(\mathds{1}_{L_0}, \overline{H})=a+b$,  we obtain
 \begin{equation}\label{eq-bd-gamma}
 \beta_{\fc}(L_0, L_1; H)\leq \gamma(L_0; H).
 \end{equation}
 Furthermore, the slope of $H$ outside of $W$ does not affect the spectral norm and boundary depth, as noted in Step~1 in the Proof of Lemma~20 in \cite{Gon24}. Thus \eqref{eq-bd-gamma} holds for $H\in C^{\infty}_{\fc}([0,1]\times W)$.

\subsection{Proof of Theorem~\ref{Ham-infinite-flat}} \label{ssec-proof-ThmA} The proof closely follows the proof of Theorem~1.1 in \cite{Ush13} with necessary modifications to deal with our current situation - manifolds with boundaries. Fix $H:D^*_gN\ra\bR$ by $H(q,p)=2|p|-1/2$. For $f\in C^\infty_c(0,1)$, define 
$$\minmax f\coloneqq\inf\{f(s)\mid s \text{ is a local maximum of }f\},$$
where a ``local maximum'' need not be strict. Since $f$ is compactly 
supported, then $\minmax f\leq 0$. We also denote $f\circ H$ the smooth extension which extends smoothly by zero to $H^{-1}(\bR\backslash (0,1))$. The following result is the key estimate (used to produce large boundary depth $\beta$). It generalizes Theorem~5.6 in \cite{Ush13} (on closed manifolds) to manifolds with boundaries.
 
\begin{prop}\label{ham-estimate}
    Let $(N,g)$ be a Riemannian manifold that contains no non-constant contractible closed geodesics and $H:D^*_gN\ra\bR$ by $H(q,p)=2|p|-1/2$. Then we have
    $$\beta(f \circ H) = \beta(\phi^1_{f\circ H})\geq \minmax f-\min f$$
    for any function $f\in C^\infty_c(\bR)$ compactly supported in $(0,1)$.
\end{prop}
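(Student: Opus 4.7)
My plan is to adapt the proof of Theorem~5.6 in \cite{Ush13} to the Liouville setting, reducing the Floer boundary depth of $f\circ H$ to a Morse-theoretic quantity that in turn is controlled by the one-dimensional Morse theory of $f$. The program has two main stages (Morse reduction, then Morse estimate), plus one technical issue specific to the Liouville boundary of $D^*_gN$.

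\medskip

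\textbf{Morse reduction.} First, I would choose a compactly supported Morse function $G$ on $D^*_gN$ with $\|G-f\circ H\|_{C^0}<\varepsilon$, obtained by a Morse-Bott perturbation concentrated near the degenerate critical set of $f\circ H$ (the zero section $N$ together with the cosphere bundles $\Sigma_c=\{2|p|-1/2=c\}$ for critical values $c$ of $f$). By the Hofer/$C^0$-Lipschitz property of $\beta$ (Proposition~3.8 in \cite{Ush13}, whose analogue underlies the proof of Theorem~\ref{betaleqgamma}), we have $|\beta(f\circ H)-\beta(G)|\leq 2\varepsilon$, so any lower bound for $\beta(G)$ robust as $\varepsilon\to 0$ transfers to $f\circ H$. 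Next observe that the hypothesis forbids non-constant contractible $1$-periodic orbits of $f\circ H$: since $X_{f\circ H}=f'(H)X_H$ and $X_H$ restricts to a reparametrization of the geodesic flow on each cosphere bundle, any such orbit would project to a non-constant contractible closed geodesic in $N$. Provided $G$ is chosen close enough, the same holds for $G$, and -- using an admissible extension of $G$ with slope below $\min\spec(\partial D^*_gN,\alpha)$, a contact-type $J$, and the no-escape principle (Lemma~\ref{lem:bound}) -- the Floer cochain complex $\CF^*(G)$ is identified with the filtered Morse cochain complex of $G$. Consequently $\beta(G)=\beta_{\rm Morse}(G)$.

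\medskip

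\textbf{Morse estimate.} The bound $\beta_{\rm Morse}(G)\geq\minmax f-\min f$ essentially mirrors the one-dimensional persistence statement for a compactly supported Morse function on $\bR$: in the Morse barcode of $f$ on $(0,1)$, there is a finite bar of length at least $\minmax f-\min f$, produced by pairing a local maximum $s_0$ realizing $\minmax f$ with (possibly via an algebraic chain of differentials) a global minimum $s_1$ realizing $\min f$. After the Morse-Bott perturbation, $\Sigma_{s_0}$ and $\Sigma_{s_1}$ contribute critical points of $G$ at filtration $\approx-\minmax f$ and $\approx-\min f$ respectively. A careful bookkeeping of the pairing, adapted from the argument in \cite{Ush13} to the Morse-Bott setting, yields a coboundary $\sigma$ at filtration $\ell(\sigma)\approx-\minmax f$ whose minimum-filtration primitive $\tau$ must include at least one critical generator originating near $\Sigma_{s_1}$, so that $\ell(\tau)\geq-\min f-o(\varepsilon)$. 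Therefore $\beta_{\rm Morse}(G)\geq \minmax f-\min f-o(\varepsilon)$, and the desired inequality follows on letting $\varepsilon\to 0$. The equality $\beta(f\circ H)=\beta(\phi^1_{f\circ H})$ is Corollary~5.4 in \cite{Ush13} as mentioned in Section~\ref{Ham-Floer}.

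\medskip

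\textbf{Main obstacle.} The principal difficulty lies in the Morse reduction step and is specific to the Liouville boundary. One must simultaneously arrange (i) an admissible extension of $G$ with slope smaller than $\min\spec(\partial D^*_gN,\alpha)$ so that $\beta(G)$ is defined and agrees with the compactly supported formulation, (ii) a contact-type $J$ confining all relevant pseudo-holomorphic cylinders to $W=D^*_gN$ via Lemma~\ref{lem:bound}, so that the Floer-equals-Morse identification holds at the filtered level, and (iii) a Morse-Bott perturbation scheme compatible with (i) and (ii) that handles the highly non-isolated critical set (an $n$-dimensional zero section plus $(2n-1)$-dimensional cosphere bundles, rather than isolated critical points as in the closed case of \cite{Ush13}). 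Each ingredient is standard individually, but their coherent combination is where the additional work beyond \cite{Ush13} is concentrated.
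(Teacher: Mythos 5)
Your overall two-stage strategy (perturb $f\circ H$ to a Morse function $G$, transfer via the Hofer/$C^0$-Lipschitz property of $\beta$, then run a one-dimensional Morse/persistence estimate giving a bar of length $\minmax f-\min f$) is the same as the paper's, and your Morse estimate sketch is in the spirit of the Lemma 5.7/Corollary 5.8 arguments of \cite{Ush13} that the paper invokes. However, there is a genuine gap at the pivotal step of your Morse reduction: you claim that, because $G$ has no non-constant contractible $1$-periodic orbits and because a contact-type $J$ together with the no-escape lemma confines Floer cylinders to $D^*_gN$, the Floer cochain complex $\CF^*(G)$ ``is identified with the filtered Morse cochain complex of $G$,'' whence $\beta(G)=\beta_{\rm Morse}(G)$. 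This identification is not justified: $G$ is only $C^0$-close to $f\circ H$ and is in no sense $C^2$-small (its values are of order $\|f\|_{C^0}$, which is large in the intended application), so while the generators and their filtrations coincide with the Morse data, the Floer differential may count $t$-dependent solutions that are not negative gradient trajectories. Absence of non-constant orbits plus confinement of curves says nothing about this. The paper bridges exactly this point with a rescaling argument: for $0<\lambda\ll1$ the Hamiltonian $\lambda G$ \emph{is} $C^2$-small, so $\beta(\phi^1_{\lambda G})=\beta_{\rm Morse}(\lambda G)$ there, and since all contractible orbits of $X_{\lambda G}$ remain constant for every $\lambda\in(0,1]$, both $\lambda\mapsto\frac1\lambda\beta(\phi^1_{\lambda G})$ and $\lambda\mapsto\frac1\lambda\beta_{\rm Morse}(\lambda G)$ are continuous functions into the finite set $\{G(p)-G(q)\mid p,q\in{\rm Crit}(G)\}$, hence constant in $\lambda$ and equal at $\lambda=1$. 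Without this (or an equivalent) argument your chain of inequalities breaks at $\beta(G)=\beta_{\rm Morse}(G)$.

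A secondary imprecision: you assert that ``provided $G$ is chosen close enough, the same holds for $G$'' regarding the absence of non-constant contractible short orbits. $C^0$-closeness does not control $X_G$, so this is not automatic; it is part of the specific construction (the proof of Theorem 4.5 in \cite{Ush10}), which also arranges that the Hessian at each critical point has small operator norm — the hypothesis needed for the $C^2$-small regime above. Likewise, the boundary is handled in the paper not by confinement alone but by a further perturbation making all critical points of $G|_{\partial D^*_gN}$ non-degenerate of type D, so that the Morse complex of the manifold with boundary is generated by interior critical points; your item (iii) gestures at this but the type-D normalization is the concrete mechanism you would need.
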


Assuming this proposition, consider a smooth function $f: \bR\ra [0,1]$ with the following properties (cf.~Example \ref{ex-Usher}):
\begin{itemize}
\item ${\rm{supp}}(f)=[\delta,1-\delta]$ for some fixed real number $\delta$ with $0<\delta<\frac{1}{4}$.
\item The only local extremum of $f|_{(\delta,1-\delta)}$ is a maximum, at $f(1/2)=1$.
\item $f''(s)<0$ if and only if $s\in (1/4, 3/4)$.
\item $f''(s)>0$ if and only if $s\in (\delta, 1/4)\cup (3/4,1-\delta)$.
\end{itemize}
As argued in Example \ref{ex-Usher}, for any  $a\in \bR^{\infty}$, the map $\psi: \bR^{\infty} \to {\rm Ham}(D_g^*N, \omega_{\rm can})$ defined by $\psi(a) = \phi^1_{\left(\sum_{i=0}^{\infty}a_if(2^{i+1}s-1)\right) \circ H}$ as in (\ref{hom-psi}) is a homomorphism.  Recall that in Example \ref{ex-Usher}, we denote $\sum_{i=0}^{\infty}a_if(2^{i+1}s-1)$ by $f_a$.

\medskip

Now we can give the proof of Theorem~\ref{Ham-infinite-flat}. For $a, b\in \bR^\infty$, we have 
\begin{align*}
    d_{\gamma}(\psi(a),\psi(b))=&\gamma(f_{a-b} \circ H)\\
    \geq&\beta(f_{a-b} \circ H)\\
    \geq& \minmax (f_{a-b}\circ H)-\min (f_{a-b}\circ H)\\
    \geq& -\min (f_{a-b}\circ H)
\end{align*}  
where the first inequality comes from Theorem~\ref{betaleqgamma} and second inequality comes from Proposition \ref{ham-estimate}. Similarly, $d_{\gamma}(\psi(a),\psi(b))\geq -\min (f_{b-a}\circ H).$ Thus 
$$d_{\gamma}(\psi(a),\psi(b))\geq\max\{-\min (f_{a-b}\circ H),-\min (f_{b-a}\circ H)\}=\|a-b\|_{\infty}.  $$
Moreover, by the inequality between the spectral norm and the Hofer norm in (\ref{gamma-hofer}), for any $a, b\in\bR^\infty$,
\begin{align*}
\begin{split}
d_{\gamma}(\psi(a),\psi(b))&\leq d_{\rm{Hofer}}(\psi(a),\psi(b))\\
&=\|\psi(a-b)\|_{\rm Hofer}\\
&=\|\phi^1_{f_{a-b}\circ H}\|_{\rm Hofer} \leq 2\|a-b\|_\infty.
\end{split}
\end{align*}
In this way, we finish the proof of Theorem~\ref{Ham-infinite-flat}. 

\medskip

In what follows, we will focus on the proof of Proposition \ref{ham-estimate}. As preparations, we need to recall the Morse theory on non-compact manifolds. For a non-compact manifold $M$, we can define the Morse cohomology with a coercive Morse function $g$, which means that for any $a\in\bR$, $M^a\coloneqq\{x\in M \mid g(x)\leq a\}$ is compact. This construction is  similar to the usual Morse theory, see \cite{Sch93}.

\begin{proof}[Proof of Proposition \ref{ham-estimate}] For $f \circ H$, we denote $\widetilde{f\circ H}$ as the admissible perturbation of $f\circ H$. Then for any $\delta>0$, by the proof of Theorem~4.5 in~\cite{Ush10}, (which only requires the different part to be compact, where our $D^*_gN$ satisfies),
 there exists a smooth Morse function $G:T^*N\ra\bR$,  such that 
    \begin{itemize}
    \item{} $G=\widetilde{f\circ H}$ on $T^*N\backslash D^*_gN$.
        \item{} $\|(G-f\circ H)|_{D^*_gN}\|_{C^0}<\delta$.
        \item{}  All contractible periodic orbits of $X_G$ with period at most $1$ in $D^*_gN$ are constant.
        \item{}  At each critical point $p$ of $G$ in $D^*_gN$, the Hessian of $G$ has operator norm less $\pi$.
    \end{itemize}
    Note that $\widetilde{f\circ H}$ is linear out of $D^*_gN$ with positive slope, $G$ is coercive and there is no periodic orbits of $X_G$ or critical points of $G$ out of $D^*_gN$. Consider a Morse complex ${\rm CM}^*(G)$, which is only generated by critical points in the interior of $D_g^*N$,  Morse cohomology group ${\rm HM}^*(\lambda G)$ is isomorphic to $H^*(T^*N;\bZ_2)\cong H^*(N;\bZ_2)$. 

Then for any $0<\lambda\ll 1$, Hamiltonian $\lambda G$ is $C^2$-small so that its Hamiltonian Floer complex coincides with its Morse complex. In particular, for all sufficiently small $\lambda\in(0,1]$, we have $\beta(\phi_{\lambda G}^1)=\beta_{\rm Morse}({\rm CM}^*(\lambda G))$. Moreover, as all contractible orbits of $X_{\lambda G}$ are constant for $\lambda\in(0,1]$, then the function
    $$\lambda\mapsto\dfrac{1}{\lambda}\beta(\phi_{\lambda G}^1)\quad\text{and}\quad\lambda\mapsto\dfrac{1}{\lambda}\beta_{\rm Morse}({\rm CM}^*(\lambda G))$$
    are both continuous from $(0,1]$ to the finite set $\{G(p)-G(q)\mid p,q\in{\rm{Crit}}(G)\}$. Since they coincide for all sufficiently small $\lambda$, they coincide for all $\lambda\in(0,1]$. In particular,  $\beta(\phi_{G}^1)=\beta_{\rm Morse}({\rm CM}^*(G))$. Then the same arguments for Lemma 5.7 and Corollary 5.8 in \cite{Ush13} 
     imply that for any regular value of $f$ such that $\min f<z<\minmax f\leq 0$ and $\min f+3\delta<z$, we have 
    $$\beta_{\rm Morse}({\rm CM}^*(G)) = \beta_{\rm Morse}(G)\geq z-\min f-2\delta.$$
 Then we have $\beta(\phi_G^1) \geq z-\min f-2\delta$. Since$\|\widetilde{f\circ H}-G\|_{C^0}\leq\delta$, we have $\beta(\phi_{f\circ H}^1)\geq z-\min f-3\delta$ for any $z<\minmax f$ and any $\delta>0$. Thus we complete the proof. \end{proof}

\subsection{Proof of Theorem~\ref{thm-Lag-flat}}\label{sec-lag}
Now we recall the results from \cite{Ush14}. Let $\gamma$ be a unit speed geodesic from $\gamma(0)=x_0$. A point $\gamma(a)$ is said to be conjugate to $x_0$ along $\gamma$, if $\exp_{\gamma(0)}:TN\ra N$ is singular at $a\gamma'(0)$. Fix a point $x_1\in N\backslash\{x_0\}$ that is not a conjugate point of $x_0$. Let
$$\cG(x_0,x_1)=\left\{\gamma:[0,1]\ra N\mid
    \gamma\text{ is a geodesic from $x_0$ to $x_1$}\right\}.$$
Obviously, one can restrict $\cG(x_0,x_1)$ to subset according to fixed homotopy class and index. For $\fc\in\pi_0(\mathcal{P}_N(x_0,x_1))$, where $\cP_N(x_0,x_1)$ denotes the space of smooth paths from $x_0$ to $x_1$ in $N$, and $l\in\bZ$, we define 
$$\cG_{\fc,l}(x_0,x_1)=\{\gamma\in\cG(x_0,x_1)\mid[\gamma]=\fc,\, {\rm{Morse}}(\gamma)=l\},$$
where ${\rm{Morse}}(\gamma)$, the Morse index of $\gamma$, is equal to the number of conjugate points along $\gamma$, counted with multiplicity. 

\medskip

To facilitate our calculation later, we need the following assumption:
\begin{assm}\label{assm}
There is a homotopy class $\fc\in \pi_0(\mathcal{P}_N(x_0,x_1))$ and an integer $k$ such that:
\begin{itemize} 
\item[(i)] $\cG_{\fc,k}(x_0,x_1)\neq\varnothing$.
\item[(ii)] $\cG_{\fc,k}(x_0,x_1)\cup \cG_{\fc,k+2}(x_0,x_1)$ is a finite set.
\item[(iii)] $\cG_{\fc,k-1}(x_0,x_1)\cup\cG_{\fc,k+1}(x_0,x_1)=\varnothing$
\item[(iv)] Either $n\neq 2$ or $k\neq 0$.
\end{itemize}
\end{assm}

\begin{remark} [Notations on homotopy classes] \label{rmk-homotopy-class} Here, let us clarify the notation $\mathfrak c$. By definition \eqref{eq-chain-complex}, $\mathfrak c$ in $\CW_{\fc}^*\left(F_{x_0},F_{x_1}; H_a\right)$ denotes a homotopy class in $\pi_0(\cP(F_{x_0},F_{x_1}))$, while Assumption \ref{assm} is about a homotopy class in $\pi_0(\cP_N(x_0,x_1))$. To match them, in what follows in this section, we will always regard points $x_i \in N$ as points in the zero section $0_N \subset T^*N$, then $\fc\in \pi_0(\mathcal{P}_N(x_0,x_1))$ will be regarded as an element in $\pi_0(\cP(F_{x_0},F_{x_1}))$. \end{remark} 
Before we give the proof of Theorem~\ref{thm-Lag-flat}, let us introduce some notations. Consider the Hamiltonian $H: T^*N \to \bR_{\geq 0}$ by $H(q,p) = |p|$ and again the carefully designed family of functions $f: \bR \to [0,1]$ satisfying conditions in the previous section. Now we require that $f$ satisfies two extra properties:
\begin{itemize}
\item $f''(s)<0$ if and only if $s\in(\frac{1}{4},\frac{3}{4})$.
\item $f''(s)>0$ if and only if $s\in(\delta,\frac{1}{4})\cup(\frac{3}{4},1-\delta)$.
\end{itemize}
The above properties are important in the proof of Proposition~\ref{prop-large-beta}.
Recall that the Hamiltonian $H_a = f_a \circ H$ parametrized by $a \in \bR^{\infty}$ where $f_a$ is the function constructed in Example \ref{ex-Usher}. We have the following proposition:
\begin{prop}[Proposition~4.3 in \cite{Ush14}]\label{prop-large-beta}
Under Assumption~\ref{assm}, there is a constant $C$ such that, for all $a\in\bR^\infty$,
\[
\beta_{\fc}(F_{x_0}, F_{x_1}; H_a)\geq \|a\|_{\infty}-C.
\]
\end{prop}
\begin{remark}\label{rmk-dual}
In \cite{Ush14}, Usher works with homology rather than cohomology. However, the boundary depths defined from a filtered chain complex and its dual filtered (co)chain complex are identical, as shown in \cite{UZ16}. Thus, we obtain the result regarding the boundary depth defined from a filtered cochain complex.
\end{remark}
Now we can give the proof of Theorem~\ref{thm-Lag-flat}.
By the Ham-invariance of the $\gamma$ and Theorem~\ref{betaleqgamma}, we have
\begin{align*}
    \delta_{\gamma}(\phi_{H_{a}}^1 (F_{x_0}),\phi_{H_{b}}^1(F_{x_0}))=\gamma(F_{x_0};H_{a-b})
    \geq\beta_{\fc}(F_{x_0}, F_{x_1};H_{a-b})\geq\|a-b\|_\infty-C.
\end{align*}
 Finally, similarly to the later part of the proof of Theorem \ref{Ham-infinite-flat} in Section \ref{ssec-proof-ThmA}, based on the inequality between the spectral norm and the Hofer norm as in (\ref{gamma-hofer}), for any $a, b\in\bR^\infty$, by Theorem~\ref{module-structure}, we have 
\begin{align*}
\delta_{\gamma}(\phi_{H_{a}}^1 (F_{x_0}),\phi_{H_{b}}^1(F_{x_0}))&\leq d_{\gamma}(\phi_{H_{a}}^1,\phi_{H_{b}}^1) \leq d_{\rm{Hofer}}(\phi_{H_{a}}^1,\phi_{H_{b}}^1)\leq 2\|a-b\|_\infty.
\end{align*}
This concludes the proof.

\subsection{Proof of Theorem~\ref{module-structure}} \label{sec-mod} 
Consider a $2$-dimensional disk $\bD$ with two boundary points $z_0, z_1$ and an interior point $x$ removed. We consider the positive end near $z_1$, which is a holomorphic embedding $\kappa_1:\bR_+\times[0,1]\ra\bD$ satisfying $$\kappa^{-1}_1(\partial\bD)=\bR_{+}\times\{0,1\}\,\text{ and }\lim_{s\ra+\infty}\kappa_1(s,\cdot)=z_1.$$ 
Near $z_0$, we equip a negative strip-like end, similarly to $z_1$.The positive end near $x$ which is a holomorphic embedding $\kappa_2:\bR_+\times [0,1]\ra\bD$ satisfying 
$$\kappa^{-1}_2(\partial\bD)=\{0\}\times\{0,1\}, \,\,\,\,\mbox{and}\,\,\,\,\lim_{s\ra+\infty}\kappa_2(s,\cdot)=x.$$
Similarly, $\bD$ can be described as the following quotient of a strip with a slit: one consider the disjoint union 
$\bR \times [0,0.5] \sqcup \bR \times [0.5,1]$
and  identify $(s,0.5^-)$ with $(s,0.5^+)$ for every $s\leq 0$ and $(s,0.5^+)$ with $(s,1)$ for every $s\geq 0$. See Figure~\ref{Fig-module} (cf.~Figure \ref{fig-1}). 

\begin{figure}[ht]
	\centering
\includegraphics[scale=1]{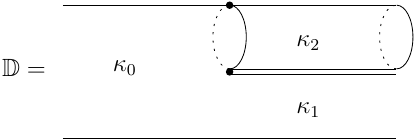}
\caption{The module structure.}\label{Fig-module}
\end{figure}

The setting here is similar to Section~\ref{section-product}. Denote by $K\in\Omega^1(\bD,C^{\infty}(\widehat{W},\bR))$ a Hamiltonian $1$-form, $X_K\in\Omega^1(\bD,\Gamma^\infty(T\widehat{W}))$ a Hamiltonian vector field associated with $K$. Let $J=\{J_z\}_{z\in\bD}$ be a family of almost complex structures parameterized by $\bD$ that satisfies $J_{\kappa_i(s,t)}=(J_i)_t$, $i=0,1,2$. Additionally, we require that $K|_{T\partial\bD}\equiv 0$, $\kappa_i^*K=(H_i)_t\otimes dt$, for $i=1,2$ and $\kappa_0^*K=(H_1\sharp H_2)_t\otimes dt$. Consider the inhomogeneous $\bar{\partial}$-equation
\begin{equation}\label{module}
    \begin{cases}
    (du-X_K)^{0,1}=0,\\
    u:\bD\ra\widehat{W},\,u(\partial\bD)\subset\widehat{L},\\
    \lim_{s\ra\pm\infty}u(\kappa_i(s,\cdot))=z_i,\,i=0,1,\\
    \lim_{s\ra\infty}u(\kappa_2(s,\cdot))=x
\end{cases}
\end{equation}
where  each $z_i$ is a chord of the Hamiltonian $H_i$ with respect to $\widehat{L}$ and $x$ is the Hamiltonian orbit of $H_2$. We denote the moduli space $\cM(K,J;z_0,z_1,x)$ is the space of the solutions $u$ to \eqref{module} with finite energy.  Define the chain map 
$$\ast:\CW^*(L,H_1)\otimes \CF^*(H_2)\ra \CW^*(L; H_1\sharp H_2)$$
$$z_1\ast x=\sum_{|z_0|=|z_1|+|x|}\sharp_{\bZ_2}\cM(K,J;z_0,z_1,x)z_0.$$
It induces a well-defined morphism on the filtered wrapped Floer cohomologies, 
$$\ast:\HW^*_{<a}(L; H_1)\otimes \HF^*_{<b}(H_2)\ra \HW^*_{<a+b}(L; H_1\sharp H_2),$$
for any $a,b\in\bR$. Then by the standard compactness and gluing arguments, we have the following commutative diagram (where the upper horizontal arrow is given by counting certain Morse flow trajectories when viewing both $H^*(W)$ and $H^*(L)$ as Morse cohomologies):
$$
\begin{tikzcd}
H^*(W)\otimes H^*(L) \arrow[r] \arrow[d, "PSS_K\otimes PSS_H"'] & H^*(L) \arrow[d, "PSS_{H\sharp K}"] \\
{\HF^*(K)\otimes \HW^*(L;H)} \arrow[r, "\ast"]                    & {\HW^*(L;H\sharp K)}                
\end{tikzcd}
$$
From the module structure
$\ast:\HW^*_{<a}(L,H)\otimes \HF^*_{<b}(K)\ra \HW^*_{<a+b}(L,H\sharp K),$
we can deduce the following inequality  on spectral invariants:
$\ell(\alpha\cdot\beta;H\sharp K)\leq \ell(\beta;H)+c(\alpha;K)$, as requested.

\bibliographystyle{amsplain}
\bibliography{biblio}


\end{document}